\numberwithin{equation}{section}
\newtheorem{theorem}{Theorem}[section]
\newtheorem{definition}[theorem]{Definition}
\newtheorem{lemma}[theorem]{Lemma}
\newtheorem{proposition}[theorem]{Proposition}
\newtheorem{remark}[theorem]{Remark}
\newcommand{\sm}{{\rm sm}}
\newcommand\Peaks{{\rm Peaks}}
\newcommand\Spin{{\rm Spin}}
\newcommand\SL{{\rm SL}}
\newcommand\Supp{{\rm Supp}}
\newcommand\bP{\mathbb{P}}
\newcommand\bQ{\mathbb{Q}}
\newcommand\hf{\widehat{f}}
\newcommand\hx{\widehat{x}}
\newcommand\hw{\widehat{w}}
\newcommand\hX{\widehat{X}}
\newcommand\hpi{\widehat{\pi}}
\newcommand\tw{{\widetilde w}}
\newcommand\tX{{\widetilde X}}
\begin{document}

\author{Michel Brion and S.~Senthamarai Kannan}

\title{Some combinatorial aspects of generalised Bott-Samelson varieties}

\begin{abstract}
We obtain two combinatorial results: an equality of Weyl groups and 
an inequality of roots, in the setting of generalised Bott-Samelson
resolutions of minuscule Schubert varieties. These results are 
used in the companion paper \cite{BK} to describe minimal rational
curves on these resolutions, and their relation to lines on the Schubert
varieties.
\end{abstract}

\maketitle


\section{Introduction}

The generalised Bott-Samelson varieties of the title are certain towers of
locally trivial fibrations with fibers being Schubert varieties. Bott-Samelson
varieties, for which all fibers are projective lines, yield widely used
desingularisations of Schubert varieties. Their generalisations were 
introduced by Sankaran and Vanchinathan (see \cite{SV94, SV95}), 
to construct small resolutions of Schubert varieties in the 
symplectic and orthogonal Grassmannian. They were then systematically 
studied by Perrin (see \cite{NP}); he associated a quiver to any 
minuscule Schubert variety $X$, and he constructed generalised Bott-Samelson
desingularisations of $X$ in terms of this quiver. In particular, Perrin's 
``construction 1'' yields all small resolutions of $X$ (see [loc.~cit., \S 5.4, \S 7.5]).

The motivation for the present paper comes from our investigation of
lines in minuscule Schubert varieties, and minimal rational curves
(an intrinsic version of lines) on their generalised Bott-Samelson 
resolutions, in the companion paper \cite{BK}. The main theorem
of the latter paper (Theorem 4.9) yields a description of the 
families of minimal rational curves on the resolutions obtained by
construction 1. Its proof combines geometric arguments with two 
combinatorial results, which are proved in the present paper. 

We now outline these two results by using notation defined in 
Section \ref{sec:prelim}. Consider a semi-simple, simply-laced 
algebraic group $G$, a minuscule parabolic subgroup $P \subset G$, 
and a Schubert variety $X(w) \subset G/P$. Let 
$\hpi: \hX(\hw) \to X(w)$ be the generalised Bott-Samelson 
desingularisation associated with a generalised reduced decomposition 
$\hw = (w_1,\ldots,w_m)$ obtained by construction 1; then 
$\hX(\hw)$ is equipped with a base point $\hx$ and a locally trivial 
fibration $\hf : \hX(\hw) \to X(w_1)$ with fiber 
$\hX(w_2,\ldots, w_m)$. Moreover, $X(w_1) \simeq G_1/P_1$ 
for a semi-simple, simply-laced subgroup $G_1 \subset G$ 
and a minuscule parabolic subgroup $P_1 \subset G_1$, and 
$G_1$ acts on $\hX(\hw)$ compatibly with its action on $X(w_1)$. 
In loose terms, our first result asserts that $\hx$ is
fixed by a Levi subgroup of $P_1$. This translates into 
an equality of Weyl groups, obtained in Theorems \ref{thm:WMG},
\ref{thm:WMD} and \ref{thm:WME}.

In the above situation, it is easy to show that there is a unique
simple root $\alpha$ such that the root $w_1^{-1}(\alpha)$ is
negative; then $w^{-1}(\alpha)$ is a negative root as well
(see \cite[Lem.~5.1]{BK}). Our second result asserts that
$w^{-1}(\alpha) \leq w_1^{-1}(\alpha)$ with equality if and
only if $w = w_1$, that is, $X(w)$ is smooth. See Propositions 
\ref{prop:heightA}, \ref{prop:heightD} and \ref{prop:heightE}.

We obtain both results via a case-by-case analysis. Types $A_n$ 
and $D_n$ present somewhat different features (see Remark 
\ref{rem:WMD}). The exceptional types $E_6, E_7$ are handled 
via a classification of the corresponding generalised Bott-Samelson 
varieties, which yields many examples of such varieties.

We informed Nicolas Perrin of preliminary versions of our work,
and he came up with shorter, uniform proofs of the equality
of Weyl groups and the root inequality. In turn, we obtained 
a variant of Perrin's proof of the root inequality, which is also 
uniform and perhaps more self-contained, and also a short,
uniform proof of the equality of Weyl groups; these are presented
in the final section of \cite{BK}. We believe that our original, 
case-by-case approach is still of interest,
as it yields additional information (for example, the Weyl groups
under consideration are generated by simple reflections in
type $A_n$, but not in type $D_n$) as well as many examples.

This paper is organized as follows. In Section \ref{sec:prelim}, 
we gather notation and recall some basic facts on flag varieties 
and their Schubert varieties. In Section \ref{sec:perrin}, we first
survey the construction of generalised Bott-Samelson varieties, 
following \cite{NP} and \cite[\S 4.2]{BK}; then we collect some 
auxiliary results. Section \ref{sec:weyl}, containing the proofs of 
the main results, forms the bulk of the paper.

\section{Preliminaries on Schubert varieties}
\label{sec:prelim}

\subsection{Flag varieties}
\label{subsec:flag}

Let $G$ be a simply-connected semi-simple algebraic group 
over an algebraically closed field. Let $P$ be a parabolic 
subgroup of $G$. Let $X=G/P$, and let $x=P/P$ be the base point.

Choose a Borel subgroup $B \subset P$ and
a maximal torus $T \subset B$. Denote by $X^*(T)$
the character group of $T$, and by $R \subset X^*(T)$ 
the root system of $(G,T)$; then the subset $R⁺$ 
of roots of $(B,T)$ is a set of positive roots of $R$. 
Also, denote by $R^-$ the corresponding set of negative roots, 
and by $S = \{ \alpha_1, \ldots, \alpha_n \} \subset R^+$
the set of simple roots. 
Given $\lambda, ~\mu \in X^{*}(T)$, we say that 
$\lambda \geq \mu$ if $\lambda-\mu$ is a nonnegative 
integral linear combination of simple roots. Further, 
$\lambda > \mu$ if in addition $\lambda-\mu\neq 0$.

We also have the coroot system $R^{\vee}$ with simple 
roots $\alpha_1^{\vee}, \ldots,\alpha_n^{\vee}$; these
form a basis of the cocharacter lattice $X_*(T)$.
The dual basis of the character lattice $X^*(T)$
consists of the fundamental weights
$\varpi_1,\ldots, \varpi_n$. More intrinsically,
for any simple root $\alpha$, we will denote by
$\varpi_{\alpha}$ the fundamental weight with
value $1$ at $\alpha^{\vee}$, and $0$ at all
other simple coroots. 

The Weyl group $W = N_G(T)/T$ is generated by the associated
simple reflections $s_1, \ldots,s_n$. We denote by
$\ell$ the corresponding length function on $W$.
A reduced decomposition of $w \in W$ is a sequence 
\[ \tw = (s_{i_1}, s_{i_2}, \ldots, s_{i_k}) \]
of simple reflections such that 
$w = s_{i_1} s_{i_2} \cdots s_{i_k}$ and $\ell(w) = k$.
For any $w \in W$, we denote by $\dot w \in N_G(T)$ a 
representative. Also, for any $\beta \in R$, we denote
by $U_{\beta} \subset G$ the corresponding root 
subgroup. 

Consider the Levi decomposition $P = R_u(P) L$,
where $L$ is a connected reductive subgroup of 
$G$ containing $T$; then $B_L := B \cap L$
is a Borel subgroup of $L$. Denote by 
$R_L \subset R$ the root system of $(L,T)$,
with subset of positive roots $R^+_L = R_L \cap R^+$ 
and subset of simple roots $I := R_L \cap S$. Then $P$ is
generated by $B$ and the $\dot s_{\alpha}$, where
$\alpha \in I$; we write $P = P_I$ and $L = L_I$.
We denote the parabolic subgroup $P_I$ also by 
$P^{S \setminus I}$. With this notation, the parabolic 
subgroup $P$ is maximal if and only if $P = P^{\alpha}$ 
for some simple root $\alpha$. 

Note that $P$ is uniquely determined by the dominant
weight $\varpi := \sum_{i \in I} \varpi_i$. We say that 
$P$ is minuscule, if so is $\varpi$; that is,
$\langle \varpi, \beta^{\vee} \rangle \leq 1$ 
for all $\beta \in R^+$. The minuscule dominant
weights are exactly the sums of minuscule fundamental
weights $\varpi_{\alpha}$ associated with irreducible 
factors of $R$; the corresponding simple roots 
$\alpha$ will be called minuscule as well.

\subsection{Schubert varieties}
\label{subsec:schubert}

We keep the notation and assumptions of the previous
subsection. The Weyl group $W_L = N_L(T)/T$ 
is generated by the simple reflections $s_{\alpha}$, where 
$\alpha \in I$; we also denote this group by $W_{I}$. 
Let $W^I$ denote the subset of $W$ consisting of those
$w$ such that $w(\beta) \in R^+$ for all $\beta \in I$;
equivalently, $R^+_I \subset w^{-1}(R^+)$.
Then $W^I$ is a set of representatives of the coset space
$W/W_I$, consisting of the elements of minimal length in 
their right coset. Note that $w \in W^I$ has length $1$ 
if and only if $w = s_{\alpha}$ for some 
$\alpha \in S \setminus I$. 
On the other hand, the unique element of maximal
length in $W^I$ is $w_0^I = w_0 w_{0,I}$, where $w_0$ 
(respectively, ~$w_{0,I}$) denotes the longest element of $W$ 
(respectively, ~$W_I$).

For any $w \in W$, the point ${\dot w} x \in G/P$ is
independent of the choice of the representative $\dot w$;
we thus denote this point by $wx$. Recall that the $wx$,
where $w \in W^I$, are exactly the $T$-fixed points in $G/P$;
moreover, $G/P$ is the disjoint union of the $B$-orbits 
$B wx$. The stabilizer $B_{wx}$ is generated
by $T$ and the root subgroups $U_{\beta}$, where 
$\beta \in R^+ \cap  w(R^+)$; in particular, 
$B_{wx}$ is smooth and connected. 
The closure of $Bwx$ in $G/P$ is the Schubert variety 
$X(w)$; we have $\dim(X(w)) = \dim(Bwx) = {\ell}(w)$. 
Note that $X(w_0^I) = X$.

We say that the homogeneous space $G/P$ is minuscule if so is $P$. 
Then the Schubert varieties $X(w) \subset G/P$ and the Weyl group 
elements $w \in W^I$ are called minuscule as well. 
Note that every minuscule homogeneous space under $G$ decomposes 
into a product of minuscule homogeneous spaces under simple factors 
of $G$, and every Schubert variety decomposes accordingly. 
Therefore, to study minuscule Schubert varieties, we may assume
that $G$ is simple; then $P = P^{\alpha}$ for some minuscule
simple root $\alpha$. We may further reduce to the case where 
$G$ is simply-laced (see \cite[Rem.~3.4]{NP}). 

From now on, we assume that $G$ is simple and simply-laced.

\section{Generalised Bott-Samelson varieties}
\label{sec:perrin}

\subsection{Generalisations of Bott-Samelson varieties due to Perrin}
\label{subsec:perrin1}

Recall that the set of simple roots $\alpha$ such that $s_{\alpha}$ 
occurs in a reduced decomposition of $w$ is independent of 
the choice of a reduced decomposition, and called the {\it support} 
of $w$. We denote this set by $\Supp(w)$.
The subgroup of $G$ generated by the $U_{\pm \alpha}$,
where $\alpha \in \Supp(w)$, will be denoted by $G_w$; this is
the derived subgroup of the Levi subgroup $L_{\Supp(w)}$, and
hence is a semi-simple subgroup of $G$, normalized by $T$
and containing a representative of $w$. Since $G$ is simply-laced, 
every simple factor of $G_w$ is simply-laced as well.

Let $I^w := \{ \alpha \in S ~\vert~ w(\alpha) \in R^+ \}$.
Then $I^{w}$ is  the largest subset of $S$ such that 
$w \in W^{I^{w}}$. We denote $P_{I^w}$ by $P^w$. 
Consider the associated Schubert variety $X(w) \subset G/P^w$,
and denote by $P_w$ the closed reduced subgroup of $G$ 
consisting of those $g$ such that $ g X(w) = X(w)$. 
Then $P_w$ is a parabolic subgroup of $G$ containing $B$,
and hence we have $P_w = P_{I_w}$, where 
$I_w := \{ \alpha \in S ~\vert~ s_{\alpha} w \leq w \}$; here 
$\leq$ denotes the Bruhat order on $W^{I^w}= W/W_{I^w}$. 

Note that $P_w \cap G_w$ is a parabolic subgroup of $G_w$,
and we have
\begin{equation}\label{eqn:isomorphism} 
X(w) = \overline{P_w w P^w}/P^w \simeq 
\overline{(P_w \cap G_w) w (P^w \cap G_w)}/(P^w \cap G_w)
\subset G_w/(P^w \cap G_w).
\end{equation}
We say that $w \in W$ is minuscule if so is $G/P^w$; then 
one may readily check that $G_w/(P^w \cap G_w)$ is a product 
of minuscule homogeneous varieties associated to simple
factors of $G_w$. 

For any minuscule $w \in W$, we have $X(w)_{\sm} = P_w w x$ 
by the main result of \cite{BP}, where $X(w)_{\sm}$ denotes 
the set of smooth points of $X(w)$. In particular, $X(w)$ is smooth 
if and only if it is homogeneous under $P_w$. Then 
$G_w \subset P_w$  in view of \cite[Lem.~4.8]{BK}.

Next, let $w = w_1 w'$, where $w_1, w' \in W$ satisfy
$P^{w_1} \cap G_{w_1} \subset P_{w'}$; equivalently, we have
$I^{w_1} \cap \Supp(w_1) \subset I_{w'}$. We may then define
\[ \tX(w_1,w') := 
\overline{(P_{w_1} \cap G_{w_1}) w_1 (P^{w_1} \cap G_{w_1})}
\times^{P^{w_1} \cap G_{w_1}} X(w'). \]
This is a projective variety equipped with an action of
$P_{w_1} \cap G_{w_1}$ and an equivariant morphism
\[ f_{w_1,w'} : \tX(w_1,w') \longrightarrow X(w_1), \]
a Zariski locally trivial fibration with fiber $X(w')$.
If in addition $\ell(w) = \ell(w_1) + \ell(w')$, then
$w' \in W^{P^w}$ and hence $P^{w'} \supset P^w$.
Thus, if $P^w$ is maximal and $w' \neq 1$, then $P^{w'} = P^w$. 
As a consequence, we obtain another equivariant morphism
\[ \pi_{w_1,w'} : \tX(w_1,w') \longrightarrow G/P^w. \]
One may check that $\pi_{w_1,w'}$ is birational to its image
$X(w)$; it restricts to an isomorphism above the open orbit 
$Bwx$. Also, note that 
\[ P_{w_1} \cap G_{w_1} \subset P_w \cap G_w. \]

This construction can be iterated, under certain additional
assumptions that are discussed in detail in \cite[\S 5.2]{NP}.
We now present some notions and results from [loc.~cit.]:
a finite sequence $\hw = (w_1,\ldots,w_m)$ of elements of $W$
is called a {\it generalised reduced decomposition of} $w$,
if we have $w = w_1 \cdots w_m$ and 
$\ell(w) = {\ell}(w_1) + \cdots + {\ell}(w_m)$.
Such a decomposition is called {\it good} if in addition 
$w$ is minuscule and we have 
\[ I^{w_i} \cap \Supp(w_i) \subset I_{w_{i+1} \cdots w_m}
\subset w_i^{\perp} \cup \Supp(w_i) \quad (1 \leq i \leq m -1), \]
where $w_{i}^{\perp}$ is the set of simple roots $\alpha$ such that 
$s_{\alpha}$ commutes with $w_{i}$. 

Under these assumptions, $(w_{i+1},\ldots,w_m)$ is a good 
generalised reduced decomposition of $w_{i+1} \cdots w_m$ 
for $i = 1, \ldots, m-1$. Moreover, 
$P^{w_i} \cap G_{w_i} \subset P_{w_{i+1} \cdots w_m}$
for all such $i$. 

Given a good generalised reduced decomposition $\hw$ of $w$, 
we obtain a projective variety $\hX(\hw)$ equipped with an action
of $P_w$, a locally trivial fibration
\[ \hf : \hX(\hw) \longrightarrow X(w_1) \]
with fiber $\hX(w_2,\ldots,w_m)$, and a birational morphism
\[ \hpi : \hX(\hw) \longrightarrow X(w). \]
Also, $\hX(\hw)$ has a base point $\hx$ such that
$\hpi(\hx) = wx$ and $\hf(\hx) = w_1 x_1$ with an 
obvious notation. Moreover, $\hpi$ is 
$P_w$-equivariant in view of \cite[\S 5.1]{NP}.
As a consequence, we have the equality of stabilizers 
$P_{w,\hx} = P_{w,wx}$. Clearly, $P_{w,wx}$ contains 
the maximal torus $T$.

Note that $\hX(\hw)$ is smooth if and only if 
$X(w_1), \ldots, X(w_m)$ are smooth. Then we have
$G_{w_1} \subset P_{w_1}$, and hence 
$G_{w_1} \subset P_w \cap G_w$.
Let $T_{w_1}$ be the neutral component of $T \cap G_{w_1}$;
then $T_{w_1}$ is a maximal torus of $G_{w_1}$. Further, 
we have an isomorphism of Weyl groups 
$W(G_{w_1},T_{w_1}) \simeq W(L_{\Supp(w_1)}, T)$.
This identifies $W(G_{w_1},T_{w_1})$ with a subgroup of $W$.

\begin{lemma}\label{lemMT}
Let $\hw = (w_1,\ldots,w_m)$ be a good generalised decomposition
of $w$ such that $\hX(\hw)$ is smooth.

\begin{enumerate}
\item
For any $1\leq i \leq n$ such that 
$s_{i}\in W(G_{w_{1}}, T_{w_{1}})$, we have 
$s_i X(w) = X(w)$. 
\item
We have the inclusion of stabilisers
$G_{w_1,\hx} \subset G_{w_1, w_1 x_1}$. 
\item 
$T_{w_{1}}$ is a common maximal torus of both   
$G_{w_1,\hx}$ and $G_{w_1, w_1 x_1}$. 
\item We have 
$W(G_{w_1,\hx}, T_{w_1})
\subset W(G_{w_1, w_1x_1}, T_{w_1})
\subset W$. 
\item 
We have 
$W(G_{w_1,\hx}, T_{w_1}) = W(G_{w_1, wx}, T_{w_1}) \subset W$. 
\end{enumerate}
\end{lemma}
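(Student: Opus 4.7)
The strategy throughout is to invoke two features of the smooth generalised Bott-Samelson construction recorded in the excerpt: the inclusion $G_{w_1} \subset P_w \cap G_w$ available under the smoothness hypothesis, and the compatibility of the $G_{w_1}$-action on $\hX(\hw)$ with the fibration $\hf$ to $X(w_1)$ and the birational morphism $\hpi$ to $X(w)$, under which $\hx$ maps to $w_1 x_1$ and $wx$ respectively.

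For (1), the identification $W(G_{w_1}, T_{w_1}) \simeq W(L_{\Supp(w_1)}, T)$ produces a representative $\dot s_i \in N_{G_{w_1}}(T_{w_1})$; since $G_{w_1} \subset P_w$ and $P_w$ is by construction the stabiliser of $X(w)$ in $G$, one concludes $s_i X(w) = X(w)$. Part (2) is immediate from $G_{w_1}$-equivariance of $\hf$: if $g \in G_{w_1}$ fixes $\hx$ then $g \cdot w_1 x_1 = \hf(g \cdot \hx) = \hf(\hx) = w_1 x_1$. For (3), the excerpt records $T \subset P_{w,wx} = P_{w,\hx}$, hence $T$ fixes $\hx$; the neutral component of $T \cap G_{w_1}$ is $T_{w_1}$, which is a maximal torus of $G_{w_1}$ and therefore a maximal torus of any closed subgroup of $G_{w_1}$ containing it — in particular of $G_{w_1,\hx}$. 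The same reasoning applied to the $T$-fixed point $w_1 x_1 \in X(w_1)$ gives the assertion for $G_{w_1, w_1 x_1}$.

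Part (4) then drops out: the inclusion from (2), restricted to normalisers of the common maximal torus $T_{w_1}$, yields the inclusion of Weyl groups, all sitting inside $W(G_{w_1}, T_{w_1}) \subset W$ via the identification above. For (5), the $P_w$-equivariance of $\hpi$ together with $\hpi(\hx) = wx$ and $G_{w_1} \subset P_w$ gives $G_{w_1,\hx} \subset G_{w_1,wx}$; the reverse inclusion follows by intersecting the previously recalled equality $P_{w,\hx} = P_{w,wx}$ with $G_{w_1} \subset P_w$, producing in fact the equality $G_{w_1,\hx} = G_{w_1,wx}$ and, a fortiori, of Weyl groups with respect to $T_{w_1}$. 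There is no serious obstacle here beyond bookkeeping; the content is that smoothness places $G_{w_1}$ inside $P_w$, and from that moment on each part of the lemma is a one-line consequence of the equivariance properties listed above.
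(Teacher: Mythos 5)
Your proposal is correct and follows the same route as the paper: (1) via $G_{w_1} \subset P_w$, (2) via $G_{w_1}$-equivariance of $\hf$, (3) via $T$-fixing of $\hx$ and $w_1 x_1$, (4) by passing to normalisers of the common torus, and (5) by intersecting the equality $P_{w,\hx} = P_{w,wx}$ with $G_{w_1}$. The paper's proof is terser (it dismisses (3) as "easy" and uses $(P_w\cap G_w)_{\hx}=(P_w\cap G_w)_{wx}$ in (5) rather than $P_{w,\hx}=P_{w,wx}$ directly), but your filled-in details are exactly the intended ones.
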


\begin{proof}
Proof of (1) follows from the fact that $G_{w_1} \subset P_w \cap G_w$. 

Proof of (2) follows from the fact that the morphism 
$\hf:\hX(\hw) \longrightarrow X(w_{1})$ is 
$G_{w_1}$-equivariant and sends $\hx$ to $w_1 x_1$.

Proof of (3) is easy.

Proof of (4): By (2) and (3), we have 
$N_{G_{w_1,\hx}}(T_{w_1}) \subset N_{G_{w_1,w_1 x_1}}(T_{w_1})$. 
Hence, we obtain
$W(G_{w_1,\hx}, T_{w_1})
\subset W(G_{w_1, w_1 x_1}, T_{w_1})$.
Clearly, $W(G_{w_1, w_1 x_1}, T_{w_1})$ 
is isomorphic to a subgroup of $W(G_{w_1}, T_{w_1})$. 
Therefore, $W(G_{w_1, w_1 x_ 1 }, T_{w_1})$ 
is identified with a subgroup of $W$.

Proof of (5): Since  $G_{w_1} \subset P_w \cap G_w$ and 
$(P_w \cap G_w)_{\hx} = (P_w \cap G_w)_{wx}$, we have 
$G_{w_1, \hx} = G_{w_1, wx}$. 
Thus, the Weyl groups are equal.
\end{proof}

\subsection{The quiver associated to a minuscule element $w$}
\label{subsec:perrin2}

We recall the following definitions and construction 1 of Perrin from \cite{NP}.

Let $P = P_I$ be a minuscule parabolic subgroup of $G$, and $w \in W^I$.
Choose a reduced decomposition
$\tw = (s_{i_{1}}, s_{i_{2}}, \ldots , s_{i_{k}})$ of $w$, and let 
$\beta_j = \alpha_{i_{j}}$ ($1 \leq j \leq k$).

\begin{definition}\label{def:sucpred}
We define the successor $s(i)$ (respectively, the predecessor $p(i)$) 
of an element $i\in [1, k]$ by 
$s(i)=min \{j\in [1,k] : j > i ~ and ~ \beta_{j}=\beta_{i} \}$ 
(respectively, by $p(i)=max \{j\in [1,k]: j < i ~ and ~ \beta_{j}=\beta_{i}\}$).
\end{definition}

\begin{definition}\label{def:quiver}
We denote by $Q_{\tw}$ the quiver whose set of vertices is the set $[1,k]$ 
and whose arrows are given in the following way: there is an arrow from $i$ 
to $j$ if $\langle \beta_{j}^{\vee} , \beta_{i} \rangle \neq 0$ and $i < j < s(i)$ 
(or only $i < j$ if $s(i)$ does not exist).  
\end{definition}

By \cite[Prop.~2.1]{ST}, any two reduced decompositions of $w$ 
differ only by commuting relations. So, the quiver $Q_{\tw}$ does not 
depend on the choice of a reduced decomposition $\tw$ of $w$.
Therefore, we denote this quiver by $Q_w$. 

The quiver comes with a coloration of its vertices by simple roots via the map
$\beta:[1,k]\longrightarrow S$ such that $\beta(j)=\beta_{j}$ for all $1\leq j \leq k$.
 
See \cite [\S 2.1]{NP} for more details.

\begin{definition}\label{def:order}
We denote by $R \subset [1,k] \times [1,k]$ the partial order on the vertices 
of the quiver $Q_w$ generated by the relations $(i,j) \in R$ if there exists 
an arrow from $i$ to $j$.
\end{definition}

\begin{definition}\label{def:peak}
We call {\it peak} any vertex of $Q_w$ minimal for the partial order $R$.
The set of peaks is denoted by $\Peaks(Q_w)$.
\end{definition}

We now explain a way of constructing good generalised reduced decompositions 
of $w$ into a product of minuscule elements $(w_i)_{1\leq i \leq m}$ (see 
\cite[\S 5.4]{NP}).

\begin{definition}\label{def:subquiver}
Let $A \subset \Peaks(Q_{w})$. We denote 
by $Q_w(A)$ the full subquiver of $Q_{w}$ with vertices being those $i$ of 
$Q_w$ such that $(j,i)\notin R$ for all $j \in \Peaks(Q_w)\setminus A$.
\end{definition} 

By \cite [Prop.~5.13]{NP}, the quiver $\widehat{Q_w}(A)$ 
obtained from $Q_{w}$ by removing the vertices of $Q_{w}(A)$ is also 
the quiver of a minuscule Schubert variety.

To construct a partition of the quiver $Q_w$ of a minuscule element $w$ into 
quivers $(Q_{w_i})_{1\leq i \leq m}$ with each $w_i$ is a minuscule element,
it suffices to give a partition of $\Peaks(Q_w)$. 
Indeed, given such partition,
$(A_i)_{1\leq i \leq m}$, we define by induction a sequence 
$(Q_i)_{0\leq i \leq m}$ with $Q_0=Q_w$, $Q_{i+1}=\widehat{Q_i}(A_{i+1})$, 
$Q_{w_{1}}=Q_w(A_{1})$. We then denote by $Q_{w_{i}}$ the quiver 
$Q_{i-1}(A_{i})$. The quivers $(Q_{w_{i}})_{1\leq i \leq m}$ form a partition 
of $Q_{w}$. Each quiver $Q_{w_{i}}$ is associated to a minuscule element 
$w_i$. The generalised reduced decomposition 
$\hw=(w_1, w_2, \ldots , w_m)$ is good (see \cite[Prop.~5.15]{NP}).

{\it Construction 1 of Perrin}. Choose any order $\{ i_1, i_2, \ldots ,i_m \}$ 
on $\Peaks(Q_w)$ and set $A_j=\{i_j\}$ ($1\leq j \leq m$).

Throughout this paper, we only consider generalised reduced
decompositions obtained by construction 1. Also, we use repeatedly
Perrin's smoothness criterion: with the above notation, $\hX(\hw)$
is smooth if and only if, for $1 \leq j \leq m$, the simple root
$\beta(i_j)$ is minuscule in $R_{\Supp(w_j)}$
(see \cite[Thm.~7.11]{NP}).

\begin{lemma}\label{lemsr}
Let $\hw=(w_1, w_2, \ldots , w_m)$ 
be a generalised reduced decomposition of $w$ 
obtained by construction 1. Then any simple reflection 
$s_i \in W(G_{w_1, w_1 x_1}, T_{w_1})$ 
lies in $W(G_{w_1, wx}, T_{w_1})$. 
\end{lemma}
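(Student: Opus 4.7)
The plan is to translate the hypothesis into the geometric statement that $\dot s_i$ fixes $\hx$, use the fibered-product structure of $\hX(\hw)$ to reduce to a condition on the fiber $\hX(\hw')$ over $w_1 x_1$, and then verify the remaining root-theoretic identity using the goodness of the decomposition.

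First, by Lemma~\ref{lemMT}(5), showing $s_i \in W(G_{w_1, wx}, T_{w_1})$ is equivalent to showing that $\dot s_i$ fixes $\hx$. Writing $\hw' := (w_2, \ldots, w_m)$ and using
\[
\hX(\hw) = \overline{(P_{w_1}\cap G_{w_1})\dot w_1 (P^{w_1}\cap G_{w_1})}\times^{P^{w_1}\cap G_{w_1}} \hX(\hw'),
\]
the base point takes the form $\hx = [\dot w_1, \hx']$. Since $\dot s_i$ fixes $w_1 x_1$, one can write $\dot s_i \dot w_1 = \dot w_1 h$ for some $h \in P^{w_1}\cap G_{w_1}$, and then $\dot s_i \hx = [\dot w_1, h\hx']$. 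Thus the task reduces to showing $h\hx' = \hx'$. By goodness, $h \in P^{w_1}\cap G_{w_1} \subset P_{w'}$, so $h$ acts on $\hX(\hw')$ and on $X(w')$. Since $\hpi'$ is $P_{w'}$-equivariant and an isomorphism over the open orbit $B w'x \ni w'x$, we have $(\hpi')^{-1}(w'x) = \{\hx'\}$, so $h\hx' = \hx'$ is equivalent to $h$ fixing $w'x$.

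Next I would translate the latter to a root condition: $h$ represents the Weyl element $s_\gamma$ with $\gamma := w_1^{-1}(\alpha_i)$, and the hypothesis that $s_i$ fixes $w_1 x_1$ forces $\gamma \in R_{I^{w_1}\cap \Supp(w_1)}$. Noting $w'^{-1}(\gamma) = w^{-1}(\alpha_i)$ and $I^{w'} = I^w$ (for $w' \neq 1$; the case $m = 1$ is trivial since then $wx = w_1 x_1$), the condition ``$h$ fixes $w'x$'' becomes $w^{-1}(\alpha_i) \in R_{I^w}$, i.e., $[w^{-1}(\alpha_i)]_{\alpha_0} = 0$, where $\alpha_0 \in S$ is the simple root with $P = P^{\alpha_0}$.

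The main obstacle is the remaining combinatorial step: given that $\gamma = w_1^{-1}(\alpha_i) \in R_{I^{w_1}\cap \Supp(w_1)}$, deduce $[w'^{-1}(\gamma)]_{\alpha_0} = 0$. Expanding $\gamma$ as a linear combination of simple roots in $I^{w_1}\cap \Supp(w_1)$ and using the goodness inclusion $I^{w_1}\cap \Supp(w_1) \subset I_{w'}$, this reduces to verifying $[w'^{-1}(\beta)]_{\alpha_0} = 0$ for each simple $\beta \in I^{w_1}\cap \Supp(w_1)$. I would prove this by exploiting the quiver structure of $Q_w$ in construction~1: $w_1$ corresponds to the ``territory'' $Q_w(\{i_1\})$ of peak $i_1$, and the global maximum of $Q_w$ (labeled $\alpha_0$) lies outside $Q_{w_1}$ for $m > 1$, so the simple roots labeling vertices of $Q_{w_1}$ other than its unique maximum are mapped by $w'^{-1}$ into the hyperplane orthogonal to $\varpi_{\alpha_0}$.
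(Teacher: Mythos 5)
The proposal takes a genuinely different route from the paper, but the route has a real gap at the crucial step.  The paper's proof is short and direct: from $s_i \leq w_1$ and $s_i w_1 x_1 = w_1 x_1$ it deduces $\alpha_i \neq \beta(p)$ for the unique peak $p$ of $Q_{w_1}$; since $\Peaks(Q_w) \cap Q_{w_1} = \{p\}$, it concludes $\ell(s_i w) = \ell(w)+1$; combining this with $s_i X(w) = X(w)$ (Lemma~\ref{lemMT}(1)) forces $s_i wx = wx$, because the $T$-fixed point $\dot s_i wx$ in $X(w)$ must be $ux$ for some $u\in W^{I^w}$ with $u\leq w$, and since $s_i w > w$ the minimal representative of $s_i w W_{I^w}$ can only be $w$.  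Your proof instead invokes Lemma~\ref{lemMT}(5) to rephrase the goal as $\dot s_i \hx = \hx$, descends through the fibered product, and translates the problem to showing $[w'^{-1}(\gamma)]_{\alpha_0}=0$ where $\gamma = w_1^{-1}(\alpha_i)\in R^+_{I^{w_1}\cap\Supp(w_1)}$.  These intermediate steps are valid (although note that $h$ should be taken in $N_G(T)\cap P^{w_1}\cap G_{w_1}$, and the case $m=1$ or $w'=1$ has to be set aside), but they ultimately reduce the statement to $w^{-1}(\alpha_i)\in R_{I^w}$, which is the condition $s_i wx = wx$ one wanted from the start; no work is saved.

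The gap is in the final combinatorial step.  You claim that the goodness inclusion $I^{w_1}\cap \Supp(w_1)\subset I_{w'}$, together with the observation that the global maximum of $Q_w$ lies outside $Q_{w_1}$, yields $[w'^{-1}(\beta)]_{\alpha_0}=0$ for every simple $\beta\in I^{w_1}\cap\Supp(w_1)$.  But $\beta\in I_{w'}$ only says $s_\beta w'\leq w'$ in $W^{I^w}$, which splits into two cases: either $w'^{-1}(\beta)\in R^+_{I^w}$, giving what you want, or $\beta$ is a left descent of $w'$, in which case $w'^{-1}(\beta)$ is negative and (since $w'\in W^{I^w}$ so a negative $w'^{-1}(\beta)$ cannot lie in $R_{I^w}$) its $\alpha_0$-coefficient is $-1$, not $0$.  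To finish you would therefore need to rule out the second case, i.e. show that no $\beta\in I^{w_1}\cap\Supp(w_1)$ appearing with positive coefficient in $\gamma$ labels a peak of $Q_{w'}$; the sentence about $w'^{-1}$ sending the non-maximal labels of $Q_{w_1}$ into the hyperplane orthogonal to $\varpi_{\alpha_0}$ asserts exactly this but gives no argument, and peaks of $Q_{w'}=\widehat{Q_w}(\{i_1\})$ need not come from $\Peaks(Q_w)$, so it is not immediate.  As written the proof is incomplete precisely at the step that contains the actual content of the lemma.
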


\begin{proof}
Since $s_i \leq w_{1}$ and $s_{i}w_{1}x_{1}=w_{1}x_{1}$, we have 
$\alpha_{i}\neq \beta(p)$ where $\Peaks(Q_{w_{1}})=\{p\}$. 
On the other hand, by construction 1, we have 
$\Peaks(Q_{w})\cap Q_{w_{1}}=\{p\}$. Therefore, we have 
${\ell}(s_{i}w)={\ell}(w)+1$.  
Also, by Lemma \ref{lemMT}(1), we have $s_i X(w)=X(w)$. 
Therefore, combining these two together, we obtain $s_i w x = w x$. 
\end{proof}

Let $i_1 < i_2 < \ldots < i_m$ be the ordering of $\Peaks(Q_{w})$ 
induced by the standard increasing ordering of integers. Let 
$\hw=(w_1, w_2, \ldots , w_m)$ be the generalised reduced 
decomposition of $w$ obtained by construction 1
corresponding to this ordering of $\Peaks(Q_{w})$.  

Let $\preceq$ be another ordering of $\Peaks(Q_{w})$.  
Let $\hw'=(w'_1, w'_2, \ldots , w'_m)$ be the generalised reduced 
decomposition of $w$ obtained by construction 1 corresponding 
to this ordering of $\Peaks(Q_{w})$. Let $1\leq q \leq m$ be 
the integer such that $i_{q}$ is the first peak in this ordering. 
Further, assume that $q\neq 1$.

Then, we have 

\begin{lemma}\label{lem:commute} 
Every simple reflection 
$s_{e} \leq w'_{1}$ commutes with every simple 
reflection $s_{f} \leq w_r$ for all $1 \leq r \leq q -1$. 
\end{lemma}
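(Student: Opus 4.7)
The plan is to translate the statement into combinatorics on the quiver $Q_w$
and argue by contradiction. Write
$\mathrm{Anc}(v):=\{p\in\Peaks(Q_w):(p,v)\in R\}$ for the set of peaks of
$Q_w$ sitting below a vertex $v$ in the order $R$. By induction on $r$
I would first establish the clean description: a vertex $v$ of $Q_w$
lies in $Q_{w_r}$ if and only if $\mathrm{Anc}(v)\subset\{i_1,\ldots,i_r\}$
and $i_r\in\mathrm{Anc}(v)$; while $v$ lies in $Q_{w'_1}=Q_w(\{i_q\})$ if
and only if $\mathrm{Anc}(v)=\{i_q\}$. The induction step rests on the
identity $\Peaks(Q_{r-1})=\{i_r,\ldots,i_m\}$: peaks of $Q_w$ that survive
the first $r-1$ removals remain minimal, and any non-peak vertex still in
$Q_{r-1}$ must see some surviving peak $i_s$ (with $s\geq r$) below it,
else it would have been placed into an earlier $Q_{w_s}$.

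The technical heart of the proof is a comparability lemma: for any two
vertices $i,j$ of $Q_w$ with $\beta(i)\neq\beta(j)$ and
$\langle\beta(j)^{\vee},\beta(i)\rangle\neq 0$, either $(i,j)\in R$ or
$(j,i)\in R$. Assuming $i<j$, I would let $i^{\ast}$ be the largest vertex
of colour $\beta(i)$ with $i^{\ast}\leq j$; the definition of the quiver
delivers an arrow $i^{\ast}\to j$ at once, so it remains only to connect
$i$ to $i^{\ast}$ in $R$, i.e.\ to show that consecutive same-colour
vertices are $R$-comparable. I expect this to be the main obstacle. The
argument here uses reducedness crucially: between two consecutive vertices
of a colour $\alpha$ the reduced word must contain a reflection not
commuting with $s_{\alpha}$ (otherwise the two $s_{\alpha}$'s could be
brought together, violating reducedness), and in the simply-laced setting
that reflection has an adjacent colour $\gamma$; the last $\gamma$-vertex
before the second $\alpha$-vertex then supplies a length-two chain in $R$.

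With these two ingredients in hand the conclusion follows by contradiction.
Suppose $s_e\leq w'_1$ and $s_f\leq w_r$ with $r<q$ fail to commute; in our
simply-laced setting this forces $\alpha_e\neq\alpha_f$ and
$\langle\alpha_f^{\vee},\alpha_e\rangle\neq 0$. Pick a vertex
$i\in Q_w(\{i_q\})$ of colour $\alpha_e$ and a vertex $j\in Q_{w_r}$ of
colour $\alpha_f$; the comparability lemma gives either $(i,j)\in R$ or
$(j,i)\in R$. In the first case, $(i_q,i)\in R$ combined with transitivity
yields $(i_q,j)\in R$, so $i_q\in\mathrm{Anc}(j)\subset\{i_1,\ldots,i_r\}$,
which is impossible since $i_q\notin\{i_1,\ldots,i_r\}$. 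In the second
case, $(i_r,j)\in R$ combined with $(j,i)\in R$ yields $(i_r,i)\in R$,
so $i_r\in\mathrm{Anc}(i)=\{i_q\}$, forcing $i_r=i_q$ and contradicting
$r<q$.
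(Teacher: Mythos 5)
Your proposal is correct and follows essentially the same route as the paper's proof: both reduce to the observation that vertices of $Q_w(\{i_q\})$ and of $Q_{w_r}$ (for $r<q$) are pairwise incomparable in $R$, and then deduce $\langle\beta(l)^{\vee},\beta(i)\rangle=0$ from the fact that non-orthogonal colours on distinct vertices force $R$-comparability. The paper's write-up leaves the comparability statement and the characterisation of $Q_{w_r}$ implicit, whereas you spell them out (via $\mathrm{Anc}$ and the reducedness argument for consecutive same-colour vertices), but the underlying argument is the same.
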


\begin{proof}
By construction 1, for any vertex $l$ of 
$Q_{w}(\Peaks(Q_{w})\setminus \{i_q\})$ 
and for any vertex of $i$ of $Q_{w}(\{i_q\})$, we have $(l,i)\notin R$. 
In particular, if $l < i$ such that $\beta(l)\neq \beta(i)$,  then
we have $\langle \beta(l)^{\vee} , \beta(i) \rangle =0$. 
This implies the assertion. 
\end{proof}

%

\section{Equality of Weyl groups and a root inequality}
\label{sec:weyl}

Let $G$ be as in Section \ref{sec:prelim}. Throughout this section, 
we consider a minuscule parabolic subgroup $P = P_I$,
a Weyl group element $w\in W^I$, and a generalised reduced 
decomposition $\hw =(w_1, w_2, \ldots, w_m)$ of $w$, obtained 
by construction 1 of Perrin.

In view of (\ref{eqn:isomorphism}),
we see that $X(w)$ is a Schubert subvariety of a minuscule flag variety 
for $G_w$. Thus, we may assume that $G_w=G$; then $\Supp(w)=S$.

\subsection{Equality of Weyl groups in type $A_n$}
\label{subsec:weylA}

Let $G$ be of type $A_n$, that is, $G = \SL_{n+1}$; then every 
fundamental weight is minuscule and hence the minuscule parabolic 
subgroups are exactly the maximal ones. The aim of this subsection 
is to prove the following:

\begin{theorem}\label{thm:WMG}
For any $w$ and $\hw$ as above, we have 
\[ W(G_{w_1,\hx}, T_{w_1}) = W(G_{w_1, w_1 x_1}, T_{w_1}) \]
and this group (viewed as a subgroup of $W$) is generated by simple reflections. 
\end{theorem}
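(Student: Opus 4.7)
The plan is to reduce the theorem to a single generation claim and then settle it via the explicit combinatorics of type $A$. Combining parts (4) and (5) of Lemma \ref{lemMT} gives
\[ W(G_{w_1,\hx}, T_{w_1}) = W(G_{w_1, wx}, T_{w_1}) \subset W(G_{w_1, w_1 x_1}, T_{w_1}), \]
while Lemma \ref{lemsr} asserts that any simple reflection of $W$ lying in the rightmost group actually lies in $W(G_{w_1, wx}, T_{w_1})$. Both assertions of the theorem will therefore follow at once if I can show that $W(G_{w_1, w_1 x_1}, T_{w_1})$ is generated by simple reflections of $W$.

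To prove the generation property, I would work in $G = \SL_{n+1}$ and decompose $\Supp(w_1)$ into its connected components in the Dynkin diagram, writing the $j$-th component as $\{\alpha_{a_j}, \ldots, \alpha_{b_j}\}$. Then $G_{w_1} = \prod_j \SL_{N_j}$ with $N_j = b_j - a_j + 2$, each factor acting on a contiguous index block, and via (\ref{eqn:isomorphism}) $X(w_1)$ becomes a product of smooth Schubert subvarieties inside minuscule Grassmannians of these factors. Since the stabilizer of $w_1 x_1$ and its Weyl group factor correspondingly, it suffices to treat a single simple factor $\SL_N$ with minuscule Grassmannian $\SL_N/P^{\alpha_c}$.

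The key combinatorial fact is: in type $A$, a smooth Schubert variety in $\SL_N/P^{\alpha_c}$ corresponds to a rectangular Young diagram of width $p$ and height $q$, and the support of the associated minuscule element equals the interval $\{\alpha_{c-q+1}, \ldots, \alpha_{c+p-1}\}$. Since the restriction of $\Supp(w_1)$ to this factor is by construction the full simple-root set of the factor, we are forced to $q = c$ and $p = N-c$; the rectangle is maximal, and the Schubert variety is the entire Grassmannian. Consequently $w_1 x_1$ is the opposite $T$-fixed point, with index set $\{N-c+1, \ldots, N\}$, and its stabilizer in $\SL_N$ is the standard parabolic $\langle s_i : i \neq N-c \rangle$, visibly generated by simple reflections. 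Reassembling across the factors completes the argument. The main obstacle is verifying the combinatorial identification of the support of a rectangular smooth Schubert variety in type $A$ (via Perrin's smoothness criterion and the reduced-word structure); once this is in hand, the remainder of the argument is essentially routine bookkeeping.
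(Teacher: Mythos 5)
Your reduction to a single generation claim is exactly right: Lemma~\ref{lemMT}(4)--(5) give $W(G_{w_1,\hx}, T_{w_1}) = W(G_{w_1, wx}, T_{w_1}) \subset W(G_{w_1, w_1 x_1}, T_{w_1})$, and Lemma~\ref{lemsr} converts ``generated by simple reflections'' into the missing inclusion $W(G_{w_1, w_1 x_1}, T_{w_1}) \subset W(G_{w_1, wx}, T_{w_1})$, so both halves of the theorem follow once one knows that $W(G_{w_1, w_1 x_1}, T_{w_1})$ is generated by simple reflections. Your argument for that generation claim is also correct, but it is \emph{not} the route the paper takes. The paper proceeds by explicit manipulation of the type-$A$ reduced word for $w_1 = w_{1,a_1} w_{2,a_2} \cdots w_{j_2-1,a_{j_2-1}}$ (respectively $w_1' = w_{b_j,a_j}\cdots w_{b_k,a_k}$ in the general ordering), building up the commutation and braid bookkeeping in Lemmas~\ref{lemcombA}, \ref{lemcA}, \ref{lemcomb2A}, \ref{lemc2A} and then running an induction on $\ell(u)$ to show any $v \in W(G_{w_1,w_1 x_1}, T_{w_1})$ is a product of simple reflections $s_j \leq w_1$ with $s_j w_1 = w_1 s_p$ for suitable $p$; it then has to handle the standard ordering and the general ordering as two separate cases. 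Your approach is more structural: from smoothness of $X(w_1)$ one has $G_{w_1} \subset P_{w_1}$, hence $X(w_1) \simeq G_{w_1}/(P^{w_1} \cap G_{w_1})$ is a full minuscule flag variety for $G_{w_1}$; thus $w_1$ is the longest element of $W(G_{w_1})^{I^{w_1}\cap\Supp(w_1)}$ and $w_1 x_1$ is the opposite $T$-fixed point, whose stabilizer has Weyl group $w_1 W_{I^{w_1}\cap\Supp(w_1)} w_1^{-1} = W_{-w_0^{G_{w_1}}(I^{w_1}\cap\Supp(w_1))}$, a standard parabolic subgroup because $-w_0^{G_{w_1}}$ is a Dynkin diagram automorphism in type $A$. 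This is uniform in the choice of ordering of the peaks, which the paper's proof is not, and it makes transparent why type $D_n$ behaves differently (cf.\ Remark~\ref{rem:WMD}). Two small points: (i) the detour through rectangular Young diagrams is unnecessary once one invokes the paper's own observation that smoothness implies $G_{w_1} \subset P_{w_1}$ and hence $X(w_1) \simeq G_{w_1}/(P^{w_1} \cap G_{w_1})$; (ii) in type $A$ with construction~1, $\Supp(w_1)$ is always a single interval, so the multi-factor decomposition of $G_{w_1}$ never actually occurs, and you should say ``Weyl group of the stabilizer'' rather than ``stabilizer,'' since the stabilizer itself is the opposite (not standard) parabolic. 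In spirit, your argument appears close to the uniform proof alluded to in the paper's introduction as being deferred to the companion paper, rather than to the case-by-case proof given here.
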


To prove this result, we first set up notation. We order the simple roots as in 
\cite[p.58]{HU}. Let $P:= P^w$; then $P = P^{\alpha_r}$ for a unique integer 
$1\leq r \leq n$.
Further, $w(i) < w(i+1)$ for every $1\leq i \leq r-1$. Let $a_{i}=w(i)-1$. 
Then $i\leq a_{i} < a_{i+1}\leq n$ for every $1\leq i \leq r-1$. Let 
\[ v=(s_{a_{1}}s_{a_{1}-1}\cdots s_{1}) (s_{a_{2}}s_{a_{2}-1}\cdots s_{2}) 
\cdots (s_{a_{r}}s_{a_{r}-1}\cdots s_{r}). \] 
Note that $v\in W^{S\setminus \{\alpha_{r}\}}$.

Since $w, v \in W^{S \setminus \{\alpha_{r}\}}$ and $w(i)=v(i)$ for all $1\leq i \leq r$, 
we have  
\[ w= (s_{a_{1}}s_{a_{1}-1}\cdots s_{1})(s_{a_{2}}s_{a_{2}-1}\cdots s_{2}) 
\cdots (s_{a_{r}}s_{a_{r}-1}\cdots s_{r}). \]
For integers $b_{i}\leq a_{i}$, we let $w_{b_{i}, a_{i}}=s_{a_{i}}s_{a_{i}-1}\cdots s_{b_{i}}$. 
Since 
\[ \tw = (s_{a_{1}}, s_{a_{1}-1}, \ldots , s_{1}, s_{a_{2}}, s_{a_{2}-1}, 
\ldots , s_{2},  \ldots , s_{a_{r}}, s_{a_{r}-1}, \ldots , s_{r}) \] 
is a reduced decomposition of $w$, 
we see that $(w_{1, a_{1}}, w_{2, a_{2}}, \ldots , w_{r, a_{r}})$ is a generalised 
reduced decomposition of $w$.

Next, we prove a succession of preliminary results:

\begin{lemma}\label{lemcombA}
Let $1\leq i < k \leq r$. 
Let $b_{s}$ ($i\leq s \leq k$) be a sequence of integers such that 
$s\leq b_{s} \leq a_{s}$ and $b_{s+1}= 1+b_{s}$ for all $i\leq s \leq k-1$.
Let $w'=w_{b_{i}, a_{i}}w_{b_{i+1}, a_{i+1}}\cdots w_{b_{k}, a_{k}}.$ 
Then for any integer $b_{i}\leq l \leq a_{i}-1$, we have $s_{l}w'=w's_{j}$ 
for some integer $j\neq b_{k}$ such that $s_{j}\leq w'$.
\end{lemma}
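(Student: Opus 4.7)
The strategy is to work with $w'$ explicitly as a permutation in $S_{n+1}$ and reduce the question about $s_l w' = w' s_j$ to a calculation of $w'^{-1}(l)$ and $w'^{-1}(l+1)$. The identity $s_l w' = w' s_j$ is equivalent to $w' s_j w'^{-1} = s_l$, i.e., $w'$ sends the unordered pair $\{j, j+1\}$ to $\{l, l+1\}$ in one-line notation. So it is enough to exhibit consecutive integers $j, j+1$ with $\{w'(j), w'(j+1)\} = \{l, l+1\}$, and then check the two additional constraints $j \ne b_k$ and $s_j \le w'$.

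Next I would compute $w'^{-1} = w_{b_k, a_k}^{-1} \cdots w_{b_i, a_i}^{-1}$ applied to $l$ and to $l+1$. Each factor $w_{b,a}^{-1}$ is the inverse cyclic shift which sends $c \mapsto c + 1$ for $b \le c \le a$ and $a + 1 \mapsto b$, and fixes all other integers. Using the hypothesis $b_{s+1} = b_s + 1$ together with the strict monotonicity $a_{s+1} \ge a_s + 1$ (which comes from $w(s) < w(s+1)$ for $s < r$, since $w \in W^{S \setminus \{\alpha_r\}}$), an induction on the number of factors applied shows that, starting from either $l$ or $l+1$ with $b_i \le l \le a_i - 1$, the value increments by $1$ at each step and always lies in the interval $[b_{i+s}, a_{i+s}]$, so the exceptional wrap-around $a_{i+s} + 1 \mapsto b_{i+s}$ is never triggered. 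Applying all $m := k - i + 1$ inverse factors yields $w'^{-1}(l) = l + m$ and $w'^{-1}(l + 1) = l + m + 1$. This forces $j := l + m = l + k - i + 1$.

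Finally, I would verify the remaining conditions for this $j$. The bound $l \ge b_i$ gives $j \ge b_i + m = b_k + 1 > b_k$, so $j \ne b_k$. For $s_j \le w'$ it suffices to check $s_j \in \Supp(w')$, since in a Coxeter group the support is invariant under braid moves. The bounds $b_k < j$ and $j \le a_i - 1 + m = a_i + (k - i) \le a_k$ (again using $a_{i+s} \ge a_i + s$) place $j$ in the interval $(b_k, a_k]$, so $s_j$ appears in the reduced word $w_{b_k, a_k} = s_{a_k} s_{a_k - 1} \cdots s_{b_k}$ and therefore in $\Supp(w')$.

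The main obstacle is the bookkeeping of the inductive step: for both starting values $l$ and $l + 1$, one must verify at each of the $m$ stages that the intermediate value stays inside $[b_{i+s}, a_{i+s}]$, so that the rule $c \mapsto c + 1$ applies rather than the wrap-around $a_{i+s} + 1 \mapsto b_{i+s}$. This relies on the monotonicity $a_{i+s} \ge a_i + s$, and is tightest at the boundary value $l = a_i - 1$, where the computation saturates the allowed range at every stage; one should treat this case explicitly before declaring the general induction complete.
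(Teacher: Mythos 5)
Your proposal is correct, and it takes a genuinely different route from the paper's argument. The paper works symbolically: it first reduces (informally) to the normalized case $b_s=s$, then shows by commutation and braid moves that $s_l w_{i,a_i} = w_{i,a_i}s_{l+1}$, and finishes by induction on $k-i$, never identifying $j$ explicitly. You instead pass to the permutation model for $W\simeq S_{n+1}$: you reformulate $s_l w' = w' s_j$ as $w'$ sending $\{j,j+1\}$ to $\{l,l+1\}$, realize each $w_{b,a}^{-1}$ as the cyclic shift $c\mapsto c+1$ on $[b,a]$ (and $a{+}1\mapsto b$), and track $w'^{-1}(l)$ and $w'^{-1}(l+1)$ step by step. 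The hypotheses $b_{s+1}=b_s+1$ and $a_{s+1}>a_s$ keep the intermediate values inside $[b_{i+s},a_{i+s}]$ at every stage, so the wrap-around never fires and each factor simply increments; this yields the closed form $j = l+k-i+1$. From this formula, $j>b_k$ is immediate (so $j\ne b_k$) and $j\le a_k$ (so $s_j$ occurs in $w_{b_k,a_k}$, hence $s_j\in\Supp(w')$, hence $s_j\le w'$). Your approach buys an explicit formula for $j$ and avoids the paper's slightly hand-wavy reduction to $b_s=s$, at the cost of committing to the type-$A$ permutation model; the paper's relation-theoretic induction is less explicit but stays within the Coxeter-group formalism that the later lemmas (\ref{lemcA}, \ref{lemcomb2A}, \ref{lemc2A}) continue to use.
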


\begin{proof}
Since $b_{s+1}= 1+b_{s}$ for all $i\leq s \leq k-1$ the pattern of 
$w_{b_{i}, a_{i}}w_{b_{i+1}, a_{i+1}}\cdots w_{b_{k}, a_{k}}$ is similar to that of 
$w_{i, a_{i}}w_{i+1, a_{i+1}}\cdots w_{k, a_{k}}$. Thus, we may assume that 
$b_{s}=s$ for all $i\leq s \leq k$.   

Since $s_{l}s_{m}=s_{m}s_{l}$ for all $m\geq l+2$,  
$s_{l}s_{l+1}s_{l}=s_{l+1}s_{l}s_{l+1}$ and $s_{l+1}s_{t}=s_{t}s_{l+1}$ 
for all $i\leq t\leq l-1$, we have 
$s_{l}w_{i, a_{i}}=s_{l}s_{a_{i}}s_{a_{i}-1}\cdots s_{i}
=s_{a_{i}}\cdots s_{i}s_{l+1}.$

Therefore, we have 
$s_{l}w'=w_{i, a_{i}}s_{l+1}w_{i+1, a_{i+1}}\cdots w_{k, a_{k}}$. Further, since 
$i+1\leq l+1 \leq a_{i+1}-1$, by induction on $k-i$, we have 
$s_{l+1}w_{i+1, a_{i+1}}\cdots w_{k, a_{k}}=w_{i+1 , a_{i+1}}\cdots w_{k, a_{k}}s_{j}$ 
for some integer $j\neq k$ such that $s_{j}\leq w'$.

Hence, we have $s_{l}w'=w_{i,a_{i}}w_{i+1, a_{i+1}}\cdots w_{k, a_{k}}s_{j}=w's_{j}$.
\end{proof}

\begin{lemma}\label{lemcA}
Let $i$, $k$, $a_{s}$, $b_{s}$ ($i\leq s \leq k$) and $w'$ be as in Lemma 
\ref{lemcombA}. Further, assume that $a_{s+1}=1+a_{s}$ for all $i\leq s \leq k-1$. 
Then for any integer $b_{i}\leq l \leq a_{k}$ different from $a_{i}$, we have 
$s_{l}w'=w's_{j}$ for some integer $j\neq b_{k}$ such that $s_{j}\leq w'$.
\end{lemma}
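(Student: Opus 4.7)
The plan is to split the admissible values of $l$ into three sub-ranges and treat each separately: (a) $l \in [b_i, a_i - 1]$, (b) $l \in [a_i + 2, a_k]$, and (c) $l = a_i + 1$. Sub-range (a) is an immediate application of Lemma \ref{lemcombA}, which already produces a $j \neq b_k$ with $s_j \leq w'$, so nothing new is required from the added hypothesis $a_{s+1} = a_s + 1$ for this portion.

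For sub-range (b), I would use induction on $k - i$. Since $s_l$ commutes with $s_m$ whenever $|l - m| \geq 2$, and $l \geq a_i + 2$, every simple reflection appearing in $w_{b_i, a_i}$ (whose indices lie in $[b_i, a_i]$) commutes with $s_l$. Hence
\[ s_l \cdot w' \;=\; w_{b_i, a_i} \cdot s_l \cdot \bigl( w_{b_{i+1}, a_{i+1}} \cdots w_{b_k, a_k} \bigr). \]
The shorter product satisfies the lemma's hypotheses with $i$ replaced by $i + 1$, and the constraint $l \neq a_{i+1} = a_i + 1$ holds trivially since $l \geq a_i + 2$. The inductive hypothesis then yields $s_l \cdot \bigl( w_{b_{i+1}, a_{i+1}} \cdots w_{b_k, a_k} \bigr) = \bigl( w_{b_{i+1}, a_{i+1}} \cdots w_{b_k, a_k} \bigr) \cdot s_j$ for some $j \neq b_k$ with $s_j \leq w'$.

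Sub-range (c), namely $l = a_i + 1$, is the crux. Here my plan is to establish the identity
\[ s_{a_i + 1} \cdot w_{b_i, a_i} \cdot w_{b_i + 1, a_i + 1} \;=\; w_{b_i, a_i} \cdot w_{b_i + 1, a_i + 1} \cdot s_{b_i} \]
by manipulating the underlying reduced words: first commute $s_{a_i + 1}$ past the block $s_{a_i - 1} \cdots s_{b_i}$ inside $w_{b_i, a_i}$; then peel off the leftmost factor $s_{a_i + 1}$ of $w_{b_i + 1, a_i + 1}$ to create the pattern $s_{a_i + 1} s_{a_i} s_{a_i + 1}$; apply the braid relation to convert it to $s_{a_i} s_{a_i + 1} s_{a_i}$; and reassemble using further commutations to land on the right-hand side, with a single trailing $s_{b_i}$. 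Once this identity is in hand, note that $s_{b_i}$ commutes with every $s_m$ for $m \geq b_i + 2$, and every reflection in $w_{b_{i+2}, a_{i+2}} \cdots w_{b_k, a_k}$ has index $\geq b_{i+2} = b_i + 2$ by the hypothesis $b_{s+1} = b_s + 1$. So the trailing $s_{b_i}$ passes through all remaining factors, giving $s_{a_i + 1} w' = w' s_{b_i}$. Then $j = b_i$ satisfies the conclusion: $j \neq b_k = b_i + (k - i)$ since $k > i$, and $s_{b_i} \leq w_{b_i, a_i} \leq w'$.

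The hard part will be the key identity in case (c). The computation involves two overlapping staircase factors with adjacent supports, and the single braid move must be orchestrated within a careful sequence of commutations to arrive cleanly at the claimed right-hand side. All other parts of the argument reduce to Lemma \ref{lemcombA} or to commutations of simple reflections whose indices differ by at least $2$, so the bulk of the technical effort is concentrated in this one braid manipulation.
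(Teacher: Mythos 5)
Your three-way decomposition matches the paper's case split, and your sub-ranges (a) and (b) are sound. For (a), appealing to Lemma \ref{lemcombA} is exactly what the paper does. For (b), you argue by induction on $k-i$, commuting $s_l$ past the entire first factor $w_{b_i,a_i}$ and invoking the inductive hypothesis on the shorter word; the paper instead treats $l=a_s$ for $i+2\le s\le k$ by saying the computation is ``similar'' to the case $l=a_{i+1}$ --- your reduction is a cleaner, more explicit way of organizing the same content, and it works: if $l\ge a_i+2$ then indeed $s_l$ commutes with every letter of $w_{b_i,a_i}$, and $l\ne a_{i+1}$ and $l\ge b_{i+1}$, so the inductive hypothesis applies (and the fact that $k-i\ge 2$ when $l\ge a_i+2$ ensures the recursion is legitimate).

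The gap is in sub-range (c). You claim the identity
\[ s_{a_i+1}\,w_{b_i,a_i}\,w_{b_i+1,a_i+1} \;=\; w_{b_i,a_i}\,w_{b_i+1,a_i+1}\,s_{b_i} \]
can be established by a \emph{single} application of the braid relation supplemented by commutations. That is not true. After you commute the leading $s_{a_i+1}$ of $w_{b_i+1,a_i+1}$ leftward past $s_{a_i-1}\cdots s_{b_i}$ and apply the braid relation $s_{a_i+1}s_{a_i}s_{a_i+1}=s_{a_i}s_{a_i+1}s_{a_i}$, you are left with $s_{a_i}s_{a_i+1}\cdot s_{a_i}(s_{a_i-1}\cdots s_{b_i})(s_{a_i}\cdots s_{b_i+1})$, and the residual factor $s_{a_i}(s_{a_i-1}\cdots s_{b_i})(s_{a_i}\cdots s_{b_i+1})$ is the same kind of expression with $a_i$ replaced by $a_i-1$: no sequence of commutations alone turns it into $(s_{a_i-1}\cdots s_{b_i})(s_{a_i}\cdots s_{b_i+1})s_{b_i}$. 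One can check this already for $b_i=1$, $a_i=2$, $a_{i+1}=3$: there $s_3\cdot s_2s_1s_3s_2 = s_2s_1s_3s_2\cdot s_1$ requires \emph{two} braid moves, and in general the identity requires $a_i-b_i+1$ braid moves. So the key identity is correct but its proof needs a nested induction on $a_i-b_i$ (the paper's ``by induction on length''), each step costing one more braid relation. You would need to replace ``apply the braid relation once, then commutations'' with that recursion; once you do, the rest of your argument goes through as written.
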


\begin{proof}
As in the proof of Lemma \ref{lemcombA}, we may assume that $b_{s}=s$ 
for all $i\leq s \leq k$.   

If $l \leq a_{i}-1$, then, we are done by Lemma \ref{lemcombA}.

If $l=a_{i+1}$, we first show that 
$s_{l}w_{i, a_{i}}w_{i+1, a_{i+1}}=w_{i,a_{i}}w_{i+1,a_{i+1}}s_{i}$.

For the base case $a_{i}=i$ and $a_{i+1}=i+1$, we have 
\[ s_{l}w_{i, a_{i}}w_{i+1, a_{i+1}}=s_{i+1}s_{i}s_{i+1}=s_{i}s_{i+1}s_{i}=
w_{i, a_{i}}w_{i+1, a_{i+1}}s_{i}. \] 

If $a_{i}\geq i+1$, then we obtain 
\[ s_{l}w_{i, a_{i}}w_{i+1, a_{i+1}}
= s_{a_{i}+1}s_{a_{i}}s_{a_{i}+1} (s_{a_{i}-1}\cdots s_{i})(s_{a_{i}}\cdots s_{i+1}) \]
\[ = s_{a_{i}}s_{a_{i}+1}(s_{a_{i}}((s_{a_{i}-1}\cdots s_{i})(s_{a_{i}}\cdots s_{i+1})))
=s_{a_{i}}s_{a_{i}+1}(w_{i,a_{i}-1}w_{i+1,a_{i}}s_{i})=w_{i,a_{i}}w_{i+1,a_{i+1}}s_{i} \]
by induction on length. 

Since $s_{i}w_{s, a_{s}}=w_{s,a_{s}}s_{i}$ for all $s \geq i+2$, this yields 
\[ s_{l} w'=w_{i, a_{i}}w_{i+1, a_{i+1}}s_{i}w_{i+2, a_{i+2}}\cdots w_{k, a_{k}}
= w' s_i. \]

The proofs for the cases $l=a_{s}$, $i+2 \leq s \leq k$ are similar.
\end{proof} 

\begin{lemma}\label{lemcomb2A}
Let $i$, $k$, $a_{s}$, $b_{s}$ ($i\leq s \leq k$) and $w'$ be as in Lemma \ref{lemcA}. 
Then for any integer $b_{k}+1\leq t \leq a_{k}$, we have $w's_{t}=s_{l}w'$ for some 
integer $l\neq a_{i}$ such that $s_{l}\leq w'$.
\end{lemma}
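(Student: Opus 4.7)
The plan is to deduce Lemma \ref{lemcomb2A} from Lemma \ref{lemcA} via a bijection/counting argument, rather than re-running the braid-relation computation with $s_t$ pushed from the right. As in the proof of Lemma \ref{lemcombA}, I would first reduce to the case $b_s = s$ for all $i \leq s \leq k$, since the pattern of $w'$ is unchanged under such a uniform shift. Under this normalisation, combined with the hypothesis $a_{s+1} = a_s + 1$, a direct inspection of $w' = w_{b_i, a_i} \cdots w_{b_k, a_k}$ shows that the set of simple reflections $s_l \leq w'$ is indexed precisely by the interval $\{b_i, b_i+1, \ldots, a_k\}$, which has cardinality $a_k - b_i + 1$.

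Next I would invoke Lemma \ref{lemcA} to define a map
\[ \phi : \{l : b_i \leq l \leq a_k,\ l \neq a_i\} \longrightarrow \{j : b_i \leq j \leq a_k,\ j \neq b_k\} \]
by the rule $s_l w' = w' s_{\phi(l)}$. The map is well-defined because Lemma \ref{lemcA} produces such an index $\phi(l) \neq b_k$ with $s_{\phi(l)} \leq w'$, and the uniqueness of $\phi(l)$ follows from the identity $s_{\phi(l)} = w'^{-1} s_l w'$. The same identity makes $\phi$ injective, since $\phi(l) = \phi(l')$ forces $s_l = s_{l'}$ and hence $l = l'$.

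To finish, I would observe that the domain and codomain of $\phi$ both have cardinality $a_k - b_i$, so $\phi$ is a bijection. For any $t$ with $b_k + 1 \leq t \leq a_k$, the index $t$ lies in the codomain, so there is a unique $l$ in the domain with $\phi(l) = t$; by construction $l \neq a_i$ and $s_l \leq w'$, and the defining identity $s_l w' = w' s_t$ rearranges to the desired $w' s_t = s_l w'$. The main obstacle is simply the cardinality bookkeeping: one must verify that, under the hypotheses $b_{s+1} = b_s + 1$ and $a_{s+1} = a_s + 1$, the supports of the consecutive blocks $w_{b_s, a_s}$ tile the interval $\{b_i, \ldots, a_k\}$, so that the set of simple reflections below $w'$ has cardinality exactly $a_k - b_i + 1$. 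This is a short check from the explicit form of the $w_{b_s, a_s}$, and no deeper technical difficulty is expected.
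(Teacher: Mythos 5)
Your argument is correct, and it takes a genuinely different route from the paper's. The paper proves Lemma \ref{lemcomb2A} by the same mechanism as Lemmas \ref{lemcombA} and \ref{lemcA}: after normalising to $b_s = s$, it pushes $s_t$ leftward through the rightmost block via braid relations, obtaining $w_{k,a_k}s_t = s_{t-1}w_{k,a_k}$, and then invokes induction on $k-i$ applied to the truncated word $w_{i,a_i}\cdots w_{k-1,a_{k-1}}$ with the index $t-1$. Your proof instead treats Lemma \ref{lemcomb2A} as a formal consequence of Lemma \ref{lemcA}: the assignment $l \mapsto j$ there, i.e.\ $l \mapsto (w')^{-1}s_l w'$, is well-defined and injective on $\{b_i,\ldots,a_k\}\setminus\{a_i\}$ with values in $\{b_i,\ldots,a_k\}\setminus\{b_k\}$, and since these two sets each have cardinality $a_k-b_i$ the map is a bijection; surjectivity then produces the required $l$ for each $t \in \{b_k+1,\ldots,a_k\}$. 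The one extra ingredient you need is that the simple reflections below $w'$ are exactly those indexed by the interval $[b_i,a_k]$, which you correctly justify from the consecutive blocks $\{b_s,\ldots,a_s\}$ overlapping (or abutting) since $b_{s+1}=b_s+1$, $a_{s+1}=a_s+1$, and $b_s\leq a_s$. Both proofs are valid; yours avoids re-running the braid-relation push from the right end and instead exploits the conjugation structure, which is arguably cleaner and makes the symmetry between the two lemmas more transparent.
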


\begin{proof}
Again, we may assume that $b_{s}=s$ for all $i\leq s \leq k$.   

Since, $s_{t}s_{m}=s_{m}s_{t}$ for all $m\leq t-2$,  $s_{t}s_{t-1}s_{t}=s_{t-1}s_{t}s_{t-1}$ and $s_{j}s_{t-1}=s_{t-1}s_{j}$ for all 
$j\geq t+1$, we have 
$w_{k, a_{k}}s_{t}=s_{a_{k}}s_{a_{k}-1}\cdots s_{k}s_{t}=s_{t-1}s_{a_{k}}\cdots s_{k}$.
Therefore, we have 
$w's_{t}=w_{i, a_{i}}w_{i+1, a_{i+1}}\cdots w_{k-1, a_{k-1}}s_{t-1} w_{k, a_{k}}$. 
Further, since $k \leq t-1 \leq a_{k}-1=a_{k-1}$, by induction on $k-i$, we have 
$w_{i, a_{i}}\cdots w_{k-1, a_{k-1}}s_{t-1}
= s_{l}w_{i , a_{i}}\cdots w_{k-1, a_{k-1}}$ for some integer 
$l\neq a_{i}$ such that $s_{l}\leq w'$.

Hence, we obtain
$w's_{t}=s_{l}w_{i,a_{i}}w_{i+1, a_{i+1}}\cdots w_{k, a_{k}}=s_{l}w'$.
\end{proof}

\begin{lemma}\label{lemc2A}
Let $i$, $k$, $a_{s}$, $b_{s}$ ($i\leq s \leq k$) and $w'$ be as in Lemma \ref{lemcA}. 
Then for any integer $b_{i}\leq t \leq a_{k}$ different from $b_{k}$, we have $w's_{t}=s_{l}w'$ 
for some integer $l\neq a_{i}$ such that $s_{l}\leq w'$.
\end{lemma}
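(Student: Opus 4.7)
The plan is to prove Lemma~\ref{lemc2A} by mirroring the proof of Lemma~\ref{lemcA}: Lemma~\ref{lemcomb2A} plays on the right the role that Lemma~\ref{lemcombA} played on the left, and the remaining cases will be handled by a braid identity that has already been established. Just as in the preceding lemmas, the pattern of $w'$ is stable under shifting the $b_s$, so I would first reduce to the case $b_s = s$ for every $i \le s \le k$. Under this reduction, the set $[b_i, a_k] \setminus \{b_k\}$ splits as $\{i, i+1, \ldots, k-1\} \sqcup \{k+1, k+2, \ldots, a_k\}$; for $t$ in the second piece the conclusion is immediate from Lemma~\ref{lemcomb2A}, so only the values $t = s$ with $s \in \{i, \ldots, k-1\}$ remain to be treated.

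For each such $s$, the plan is to push $s_s$ leftward through $w'$ in three stages. First, every simple reflection appearing in $w_{s+2, a_{s+2}} \cdots w_{k, a_k}$ has index at least $s+2$ and hence commutes with $s_s$, so $s_s$ may be moved past this tail (vacuously when $s = k-1$). Second, the key identity
\[ w_{s, a_s}\, w_{s+1, a_{s+1}}\, s_s \;=\; s_{a_{s+1}}\, w_{s, a_s}\, w_{s+1, a_{s+1}} \]
is, after rearrangement, exactly the identity proved in the body of Lemma~\ref{lemcA} for the case $l = a_{i+1}$, now with $s$ playing the role of $i$; so no new computation is needed. Third, the factors in $w_{i, a_i} \cdots w_{s-1, a_{s-1}}$ all have index at most $a_{s-1} = a_{s+1} - 2$ and therefore commute with $s_{a_{s+1}}$, which can be slid all the way to the left (vacuously when $s = i$).

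Combining the three stages yields $w' s_s = s_{a_{s+1}} w'$, so one sets $l = a_{s+1}$. The inequality $l \neq a_i$ is immediate from the strict monotonicity $a_i < a_{i+1} \leq a_{s+1}$, and $s_l \leq w'$ holds because $s_{a_{s+1}}$ is the leading factor of $w_{s+1, a_{s+1}}$. Since the required braid identity is already in place from Lemma~\ref{lemcA}, there is no genuine obstacle: the main conceptual point is to recognise that pushing a simple reflection rightward in Lemma~\ref{lemcA} is equivalent to pushing one leftward here, so a single braid calculation does double duty.
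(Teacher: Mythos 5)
Your proof is correct and follows essentially the same route as the paper's: reduce to $b_s = s$, dispatch $t\geq k+1$ via Lemma~\ref{lemcomb2A}, and for $t = s \leq k-1$ commute $s_s$ past the tail, apply a two-factor braid identity to produce $s_{a_{s+1}}$, and commute that past the head. The one cosmetic difference is that you reuse the braid identity already established inside Lemma~\ref{lemcA}, whereas the paper reproves it by a fresh induction on length for the representative case $t=k-1$ and declares the remaining cases similar.
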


\begin{proof}
As in the proof of Lemma \ref{lemcombA}, we may assume that 
$b_{s}=s$ for all $i\leq s \leq k$.   

If $t\geq k+1$, then we are done by Lemma \ref{lemcomb2A}.

If $t=k-1$, we first show that 
$w_{k-1, a_{k-1}}w_{k, a_{k}}s_{t}=s_{a_{k}}w_{k-1,a_{k-1}}w_{k,a_{k}}$.

For the base case $k=a_{k}$ and $k-1=a_{k-1}$, we have 
\[ w_{k-1,a_{k-1}}w_{k,a_{k}}s_{t}=s_{k-1}s_{k}s_{k-1}
=s_{k}s_{k-1}s_{k}=s_{a_{k}}w_{k-1,a_{k-1}}w_{k,a_{k}}. \]

If $a_{k-1}\geq k$, then we obtain
\[ w_{k-1, a_{k-1}}w_{k, a_{k}}s_{t}
=w_{k,a_{k-1}}w_{k+1,a_{k}}(s_{k-1}s_{k}s_{k-1})
=w_{k,a_{k-1}}w_{k+1,a_{k}}(s_{k}s_{k-1}s_{k}) \]
\[ =(w_{k,a_{k-1}}w_{k+1,a_{k}}s_{k})(s_{k-1}s_{k})
=(s_{a_{k}}w_{k,a_{k-1}}w_{k+1,a_{k}})(s_{k-1}s_{k})=s_{a_{k}}w_{k-1,a_{k-1}}w_{k,a_{k}} \]
by induction on length.

Since $w_{j,a_{j}}s_{a_{k}}=s_{a_{k}}w_{j,a_{j}}$ for all 
$i\leq  j\leq k-2$, we have $w's_{t}=s_{a_{k}}w'$.

The proofs for the cases $i \leq t \leq k-2$ are similar.

\end{proof} 

Recall that 
$\tw = 
(s_{a_1}, s_{a_1 -1}, \ldots , s_1, s_{a_2}, \ldots, s_2, \ldots, s_{a_r}, \ldots, s_r)$ 
is a reduced decomposition of $w=w_{1,a_{1}} w_{2,a_{2}} \cdots w_{r,a_{r}}$. 
Let $\beta_{1}=\alpha_{a_{1}}$, $\beta_{2}=\alpha_{a_{1}-1}$, etc. 
Therefore, the {\it peaks} of the quiver 
$Q_w$ are the indices $i$ such that ${\ell}(s_{\beta_{i}}w)={\ell}(w)-1$.

Let $J(w):=\{2\leq j \leq r: a_{j}-a_{j-1}\geq 2\}\bigcup \{1\}$ and 
let $|J(w)|=m$. By the above paragraph, peaks of the quiver $Q_{w}$ 
are indexed by the elements of $J(w)$. So, $\Peaks(Q_{w})$ is identified with 
$\{i_{j}\in [1,\ell(w)] : j\in J(w)\}$. 
Then we have $\beta(i_{j})=\alpha_{a_{j}}$ for all $j\in J(w)$. 

Let $1=j_{1} < j_{2} < \cdots < j_{m}\leq r$ be the standard increasing ordering 
of elements of $J(w)$, that is; the ordering induced by the usual ordering of 
positive integers. Let $\hw = (w_1, w_2, \ldots , w_m)$ be the 
generalised reduced decomposition of $w$ obtained by construction 1 
corresponding to this ordering.  

By this construction , we have 
$w_{1}=w_{1, a_{1}}w_{2, a_{2}}\cdots w_{j_{2}-1, a_{j_{2}-1}}$. 
Note that $\ell(s_{i}w_{1})={\ell}(w_{1})-1$ if and only if $i=a_{1}$. 
Therefore, again by construction 1, we have 
$\Peaks(Q_{w_{1}})=\{p\}$, where $\beta(p)=\alpha_{a_{1}}$. Thus, we have 
$\langle \alpha_{a_{t+1}}^{\vee}, \alpha_{a_{t}} \rangle \neq 0$ for all 
$1\leq t \leq j_{2}-2$. Hence, we have 
$a_{t+1}=1+a_{t}$ for all $1\leq t \leq j_{2}-2$.

We may now prove Theorem \ref{thm:WMG} in the case where the generalised 
reduced decomposition is associated to the above standard ordering of peaks. 
For the reader's convenience, we recall its statement:

\begin{lemma}\label{lemWM}
With the above assumptions, we have
$W(G_{w_1,\hx}, T_{w_1}) = W(G_{w_1, w_1 x_1}, T_{w_1})$ 
and this group is generated by simple reflections. 
\end{lemma}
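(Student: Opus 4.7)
The plan is to establish the reverse of the inclusion
\[ W(G_{w_1,\hx}, T_{w_1}) = W(G_{w_1, wx}, T_{w_1}) \subset W(G_{w_1, w_1 x_1}, T_{w_1}) \]
given by Lemma \ref{lemMT}(4) and (5). The idea is to exhibit $W(G_{w_1, w_1 x_1}, T_{w_1})$ as a subgroup of $W$ generated by explicit simple reflections, and then verify via Lemma \ref{lemsr} that each such generator fixes $wx$ as well. Both conclusions of the lemma — the equality of Weyl groups, and generation by simple reflections — then fall out simultaneously.

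The key step is the explicit identification of $W(G_{w_1, w_1 x_1}, T_{w_1})$. Under the standard ordering of peaks, the derivation preceding the statement forces $a_{t+1} = a_t + 1$ for $1 \leq t \leq j_2 - 2$, so
\[ w_1 = (s_{a_1} \cdots s_1)(s_{a_1+1} \cdots s_2) \cdots (s_{a_{j_2-1}} \cdots s_{j_2-1}) \]
is the ``staircase.'' Setting $a = a_1$ and $k = j_2 - 1$, I would observe that $\Supp(w_1) = \{\alpha_1, \ldots, \alpha_{a+k-1}\}$, that $G_{w_1} \simeq \SL_{a+k}$, and that $\ell(w_1) = ak$ matches the dimension of the Grassmannian $\mathrm{Gr}(k, a+k)$, so $w_1$ is the longest representative of its coset in $W(G_{w_1})$ modulo $W_{P^{w_1} \cap G_{w_1}}$. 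Hence, via (\ref{eqn:isomorphism}), $X(w_1) \simeq G_{w_1}/(P^{w_1} \cap G_{w_1}) \simeq \mathrm{Gr}(k, a+k)$, and identifying $T_{w_1}$-fixed points of this Grassmannian with $k$-subsets of $[a+k]$, the base point $x_1$ corresponds to $\{1, \ldots, k\}$ while $w_1 x_1$ corresponds to $\{a+1, \ldots, a+k\}$. The stabilizer of this coordinate subspace in $W(G_{w_1}, T_{w_1}) \simeq S_{a+k}$ is precisely the Young subgroup $S_a \times S_k$, which is generated by the simple reflections
\[ \{s_1, \ldots, s_{a-1}\} \cup \{s_{a+1}, \ldots, s_{a+k-1}\}, \]
all of which are simple reflections of $W$ under the inclusion $W(G_{w_1}, T_{w_1}) \hookrightarrow W$.

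With this identification in hand, each of these $s_i$ satisfies $s_i \leq w_1$ and $s_i w_1 x_1 = w_1 x_1$, so by Lemma \ref{lemsr} it lies in $W(G_{w_1, wx}, T_{w_1}) = W(G_{w_1, \hx}, T_{w_1})$; since these reflections generate $W(G_{w_1, w_1 x_1}, T_{w_1})$, the desired reverse inclusion follows. The main obstacle is not conceptual but combinatorial: one must unambiguously extract from the staircase factorisation the identification of $G_{w_1}$, of the parabolic $P^{w_1} \cap G_{w_1}$, and of the image of $w_1 x_1$ in the Grassmannian, and in particular confirm that no reflection outside the stated Young subgroup fixes $w_1 x_1$. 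This is where the technical Lemmas \ref{lemcombA}--\ref{lemc2A} do their work: their conjugation identities $s_\ell w' = w' s_j$ let one check algebraically, without leaving the language of reduced words, which simple reflections $s_i$ of $W$ are carried by $w_1^{-1} \cdot (-) \cdot w_1$ into $W_{P^{w_1} \cap G_{w_1}}$ — equivalently, which $s_i$ fix $w_1 x_1$ — and thus furnish the purely combinatorial counterpart to the Young-subgroup calculation above.
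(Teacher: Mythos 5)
Your proof is correct, and it takes a genuinely different route from the paper's. The paper establishes the Claim — that $W(G_{w_1,w_1 x_1}, T_{w_1})$ is generated by simple reflections — abstractly, by an induction on $\ell(u)$ for $v w_1 = w_1 u$ that repeatedly applies the conjugation identities of Lemma~\ref{lemc2A} to reduce to the base case. It never identifies the group or the variety explicitly. You instead pin everything down geometrically: the staircase relations $a_{t+1}=a_t+1$ force $\Supp(w_1)=\{\alpha_1,\dots,\alpha_{a+k-1}\}$ with $G_{w_1}\simeq\SL_{a+k}$, the unique descent of $w_1$ at position $k$ identifies $P^{w_1}\cap G_{w_1}$ as the Grassmannian parabolic, the dimension count $\ell(w_1)=ak$ upgrades $X(w_1)$ from a Schubert subvariety to all of $\mathrm{Gr}(k,a+k)$, and then $w_1 x_1$ is the coordinate subspace $\{a+1,\dots,a+k\}$ whose stabilizer in $S_{a+k}$ is visibly the Young subgroup $S_a\times S_k$. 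One can also see this purely Lie-theoretically: $W(G_{w_1,w_1 x_1},T_{w_1})=w_1\, W_{I^{w_1}\cap\Supp(w_1)}\,w_1^{-1}$, and since $w_1$ maps $\{1,\dots,k\}$ to $\{a+1,\dots,a+k\}$ and $\{k+1,\dots,a+k\}$ to $\{1,\dots,a\}$, this conjugate is generated by $s_1,\dots,s_{a-1},s_{a+1},\dots,s_{a+k-1}$. The final step is identical to the paper's: each such $s_i$ satisfies $s_i\leq w_1$ and $s_i w_1 x_1=w_1 x_1$, so Lemma~\ref{lemsr} puts it in $W(G_{w_1,wx},T_{w_1})$, and both conclusions of the lemma follow at once. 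What your approach buys is transparency — the group is named, not merely shown to be reflection-generated; what the paper's approach buys is reusability — the same inductive template, built on the combinatorial conjugation identities, transports with minor modification to the non-standard orderings and (in spirit) to type $D$, where no single Grassmannian picture is available. One small inaccuracy in your write-up: the closing paragraph suggests that Lemmas~\ref{lemcombA}--\ref{lemc2A} are needed to ``unambiguously extract'' the Young-subgroup identification, but your Grassmannian computation in fact bypasses them entirely; they are the paper's substitute for that computation, not a supplement to it.
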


\begin{proof}
By Lemma \ref{lemMT}(5), we have
$W(G_{w_1,\hx}, T_{w_1}) = W(G_{w_1,wx}, T_{w_1})$.
Further, by Lemma \ref{lemMT}(4), we have 
$W(G_{w_1, \hx}, T_{w_1})
\subset W(G_{w_1, w_1 x_1}, T_{w_1})$.
Thus, it suffices to prove that
$W(G_{w_1, w_1 x_1}, T_{w_1}) \subset W(G_{w_1,wx}, T_{w_1})$
and the latter is generated by reflections.

Recall that $w_{1}=w_{1, a_{1}}w_{2, a_{2}}\cdots w_{j_{2}-1, a_{j_{2}-1}}.$
We first prove the following

{\it Claim}. $W(G_{w_1, w_ 1 x_ 1 }, T_{w_1})$ 
is generated by simple reflections.

Let $v\in W(G_{w_1}, T_{w_1})$ be such that $vw_{1}=w_{1}u$ 
for some $u\in W(G_{w_1}, T_{w_1})$. We prove by induction on 
${\ell}(u)$ that $v$ is a product of simple reflections $s_{j}\leq w_{1}$ such that 
$s_{j}w_{1}=w_{1}s_{t}$ for some integer $t\neq j_{2}-1$ with  $s_{t}\leq w_{1}$. 

If ${\ell}(u)=1$, then we have $u=s_{t}$ for some $t\neq j_{2}-1$ such that 
$s_{t}\leq w_{1}$. Hence by Lemma \ref{lemc2A}, we have $v=s_{l}$ 
for some integer $l\neq a_{1}$ such that $s_{l}\leq w_{1}$. 

So, assume that ${\ell}(u)\geq 2$. Choose an integer $t\neq j_{2}-1$ such that 
$s_{t}\leq w_{1}$ and ${\ell}(us_{t})={\ell}(u)-1$. 

We have $vs_{w_{1}(\alpha_{t})}w_{1}=vw_{1}s_{t}=w_{1}us_{t}$. Note that 
$w_{1}(\alpha_{t})\in R^{+}\setminus  v^{-1}(R^{+})$. 
Therefore we have $s_{w_{1}(\alpha_{t})}\in W(G_{w_{1}}, T_{w_{1}})$. Since 
${\ell}(us_{t})={\ell}(u)-1$, by induction $vs_{w_{1}(\alpha_{t})}$ is a product 
of simple reflections $s_{j}\leq w_{1}$ such that $s_{j}w_{1}=w_{1}s_{p}$ 
for some integer $p\neq j_{2}-1$ such that $s_{p}\leq w_{1}$.

On the other hand, by Lemma \ref{lemc2A}, we have $w_{1}s_{t}=s_{l}w_{1}$ 
for some integer $l\neq a_{1}$ such that $s_{l}\leq w_{1}$. Therefore, we have  
$w_{1}(\alpha_{t})=\alpha_{l}$ and so $v$ is a product of simple reflections 
$s_{j}\leq w_{1}$ such that $s_{j}w_{1}=w_{1}s_{p}$ for some integer $p\neq j_{2}-1$ 
such that $s_{p}\leq w_{1}$.

This proves the claim.

Now, let $v\in W(G_{w_1, w_1 x_1}, T_{w_1})$. 
Then by the claim, $v$ is a product of simple reflections $s_{j}\leq w_{1}$ 
such that $s_{j}w_{1}=w_{1}s_{p}$ for some integer $p\neq j_{2}-1$ such that 
$s_{p}\leq w_{1}$.
On the other hand,  by Lemma \ref{lemsr}, for any such $s_{j}$, we have 
$s_{j}wx=wx$ and hence we have $vwx=wx$. This proves the desired inclusion.

\end{proof}

To complete the proof of Theorem \ref{thm:WMG}, we now consider 
an arbitrary ordering $\preceq$ of the elements of $J(w)$ and let $j\in J(w)$ 
be the first element in this ordering. 
Let $\hw'=(w'_{1}, w'_{2}, \ldots , w'_{m})$ be the generalised 
reduced decomposition obtained by construction 1 corresponding 
to this ordering of $\Peaks(Q_{w})$. 
Let $1\leq q \leq m$ be the integer such that $j=j_{q}$. Again by 
construction 1, we have $w_{q}=w'_{1}v$ for some $v\in W$ such that 
${\ell}(w_{q})={\ell}(w'_{1})+{\ell}(v)$.

Let $\hX(\hw')$ be the variety corresponding to $\hw'$, with base
point $\hx'$.

Recall from Lemma \ref{lemMT} that 
$W(G_{w'_1,\hx'}, T_{w'_1}) \subset W(G_{w'_1, w'_1 x'_1}, T_{w'_1})
\subset W$. 

By arguing as in the beginning of the proof of Lemma \ref{lemWM}, 
it suffices to prove the inclusion 
$W(G_{w'_1,w'_1 x'_1}, T_{w'_1}) \subset W(G_{w'_1, wx}, T_{w'_1})$. 

The rest of the argument is also similar to that of Lemma \ref{lemWM}.
For completeness of proof, we give the details.

If the ordering of the peaks of $Q_{w}$ is such that $w_{1}'=w_{1}$, 
we are done. Otherwise, let $k=j_{q+1}-1$. Then by construction 1, 
there exists a sequence $l\leq b_{l} \leq a_{l}$ ($j \leq l \leq k$) 
of positive integers such that $b_{l+1}=b_{l}+1$ for all $j\leq l \leq k-1$, 
and $w'_{1}=w_{b_{j},a_{j}}w_{b_{j+1},a_{j+1}}\cdots w_{b_{k},a_{k}}$. 
Further, we have $a_{l+1}=a_{l}+1$ for all $j\leq l \leq k-1$. 

So, $w'_{1}$  satisfies the hypothesis of Lemma \ref{lemc2A}. Therefore, 
we can imitate the proof of Lemma \ref{lemWM}. We first prove the following

{\it Claim :} $W(G_{w'_1,w'_1 x'_1}, T_{w'_1})$ 
is generated by simple reflections.

Let $v\in W(G_{w'_1}, T_{w'_1})$ be such that 
$vw'_{1}=w'_{1}u$ for some $u\in W(G_{w'_{1}}, T_{w'_1})$. 
We prove by induction on $\ell(u)$ that $v$ is a product of simple reflections 
$s_{i}\leq w'_{1}$ such that $s_{i}w'_{1}=w'_{1}s_{t}$ for some integer 
$t\neq b_{k}$ with  $s_{t}\leq w_{1}$. 

If ${\ell}(u)=1$, then we have $u=s_{t}$ for some $t\neq b_{k}$ such that 
$s_{t}\leq w'_{1}$. Hence by Lemma \ref{lemc2A}, we have $v=s_{l}$ 
for some integer $l\neq a_{j}$ such that $s_{l}\leq w'_{1}$. 

So, assume that ${\ell}(u)\geq 2$. Choose an integer $t\neq b_{k}$ such that 
$s_{t}\leq w'_{1}$ and ${\ell}(us_{t})={\ell}(u)-1$. 

We have $vs_{w'_{1}(\alpha_{t})}w'_{1}=vw'_{1}s_{t}=w'_{1}us_{t}$. Note that 
$w'_{1}(\alpha_{t})\in R^{+}\setminus  v^{-1}(R^{+})$. 
Therefore we have $s_{w'_{1}(\alpha_{t})}\in W(G_{w'_{1}}, T_{w'_{1}})$. 
Since ${\ell}(us_{t})={\ell}(u)-1$, by induction $vs_{w'_{1}(\alpha_{t})}$ 
is a product of simple reflections 
$s_{i}\leq w'_{1}$ such that $s_{i}w'_{1}=w'_{1}s_{p}$ for some integer 
$p\neq b_{k}$ such that $s_{p}\leq w'_{1}$.

On the other hand, by Lemma \ref{lemc2A}, we have $w'_{1}s_{t}=s_{l}w'_{1}$ 
for some integer $l\neq a_{j}$ such that $s_{l}\leq w'_{1}$. Therefore, we have  
$w'_{1}(\alpha_{t})=\alpha_{l}$ and so $v$ is a product of simple reflections 
$s_{i}\leq w'_{1}$ such that $s_{i}w'_{1}=w'_{1}s_{p}$ for some integer
$p\neq b_{k}$ such that $s_{p}\leq w'_{1}$.

This proves the claim.

Now, let $v\in W(G_{w'_1, w'_1 x'_1}, T_{w'_1})$. 
Then by the claim, $v$ is a product of simple reflections $s_{i}\leq w'_{1}$ 
such that $s_{i}w'_{1}=w'_{1}s_{p}$ for some integer $p\neq b_{k}$ such that 
$s_{p}\leq w'_{1}$.

On the other hand,  by Lemma \ref{lemsr}, for any such $s_{i}$, we have 
$s_{i}wx=wx$ and hence $vwx=wx$. This completes the proof
of Theorem \ref{thm:WMG}.

\subsection{Root inequality in type $A_n$}
\label{subsec:rootA}

We keep the notation of Subsection \ref{subsec:weylA}. In particular, 
$w\in W$ denotes a minuscule element, and $P:= P^w = P^{\alpha_r}$ 
for a unique integer $1\leq r \leq n$. Recall that there exists 
a unique sequence $1 \leq a_{1} < a_{2} < \ldots < a_{r} \leq n$ of integers 
such that $(w_{1, a_{1}}, w_{2, a_{2}}, \ldots , w_{r, a_{r}})$ is 
a generalised reduced decomposition of $w$. 
Also, recall that $J(w):=\{2\leq j \leq r: a_{j}-a_{j-1}\geq 2\}\bigcup \{1\}$ 
is identified with $\Peaks(Q_{w})$. 

Let $\preceq$ be an arbitrary ordering of the elements of $J(w)$.
Let $\hw' = (w'_1, w'_2, \ldots , w'_m)$ be the generalised reduced 
decomposition of $w$ obtained by construction 1 corresponding to this ordering.
 
Then we have 

\begin{proposition}\label{prop:heightA}
Let $\alpha$ be the unique simple root such that $(w'_{1})^{-1}(\alpha)$ 
is a negative root (see \cite[Lem.~5.1]{BK}). Then we have 
$(w'_{1})^{-1}(\alpha) \geq  w^{-1}(\alpha)$. 
Further, $(w'_{1})^{-1}(\alpha)=w^{-1}(\alpha)$ if and only if $w = w'_{1}$; 
that is, $m=1$.
\end{proposition}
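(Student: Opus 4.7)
The plan is to prove the inequality by computing $w'_1$ and $w$ as explicit permutations of $\{1,\ldots,n+1\}$, and to settle the equality case combinatorially via the quiver $Q_w$.

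First I would identify $\alpha$: the unique peak of $Q_{w'_1}$ is labelled by $\beta(i_q)=\alpha_{a_j}$, where $j=j_q\in J(w)$ corresponds to the first peak $i_q$ in $\preceq$, so $\alpha=\alpha_{a_j}$. Next, setting $K:=k-j$, I would prove by induction on $K$ (analogous in spirit to Lemma~\ref{lemc2A}, using the hypotheses $a_{l+1}=a_l+1$ and $b_{l+1}=b_l+1$ given in the setup) that $w'_1$ acts on $\{1,\ldots,n+1\}$ as the permutation sending $b_j+i\mapsto a_j+1+i$ for $0\le i\le K$, sending $b_j+K+1\mapsto b_j$, sending $y\mapsto y-K-1$ for $b_j+K+2\le y\le a_k+1$, and fixing all other values. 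Inverting, $(w'_1)^{-1}(a_j)=a_k+1$ and $(w'_1)^{-1}(a_j+1)=b_j$, so $(w'_1)^{-1}(\alpha_{a_j})=-(\alpha_{b_j}+\alpha_{b_j+1}+\cdots+\alpha_{a_k})$. For $w$, using $w(l)=a_l+1$ for $l\le r$, the fact that $w$ is increasing on $\{r+1,\ldots,n+1\}$, and $a_j\notin\{a_l+1:1\le l\le r\}$ (which follows from $j\in J(w)$, since $j=1$ or $a_j-a_{j-1}\ge 2$), I would obtain $w^{-1}(a_j)=r+a_j-j+1$ and $w^{-1}(a_j+1)=j$, giving $w^{-1}(\alpha_{a_j})=-(\alpha_j+\alpha_{j+1}+\cdots+\alpha_{r+a_j-j})$.

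Subtracting then yields
\[
(w'_1)^{-1}(\alpha)-w^{-1}(\alpha)=\sum_{l=j}^{b_j-1}\alpha_l+\sum_{l=a_k+1}^{r+a_j-j}\alpha_l,
\]
which is a nonnegative combination of simple roots: $j\le b_j$ by hypothesis, and $a_k=a_j+(k-j)\le a_j+(r-j)=r+a_j-j$ since $k\le r$. This establishes the inequality, and equality forces both $b_j=j$ and $k=r$.

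For the equality case, $k=r$ is equivalent to $q=m$, and I claim $b_j=j$ is equivalent to $q=1$. If $q=1$, the standard construction (as in the proof of Lemma~\ref{lemWM}) gives $b_l=l$, so $b_j=1=j$. Conversely, if $q\ge 2$, I would argue that the vertex of $Q_w$ corresponding to the letter $s_{j_q}$ at the bottom of block $j_q$ is reachable from the earlier peak $i_{q-1}$ via a descending staircase: start at $i_{q-1}$ (the top of block $j_{q-1}$), hop successively to the top of each block $j_{q-1}+1,\ldots,j_q-1$ using the condition $a_{l+1}=a_l+1$ (valid for $j_{q-1}\le l\le j_q-2$, as the intermediate indices lie outside $J(w)$), descend block $j_q-1$ down to $s_{j_q-1}$, and finally arrow across to $s_{j_q}$. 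Each step is a simple-root difference of $1$, and the position constraint $P<P'<s(P)$ of Definition~\ref{def:quiver} is readily checked. Hence $s_{j_q}\notin Q_w(\{i_q\})$, so $b_j>j$. Combining, equality in the root inequality occurs exactly when $q=1=m$, i.e., $m=1$ and $w=w'_1$. The main obstacle I foresee is this staircase argument: verifying each arrow in $Q_w$ along the chain and handling the transitions between blocks carefully.
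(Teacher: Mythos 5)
Your proposal is correct, and the key root computation $(w'_1)^{-1}(\alpha_{a_j})=-\sum_{i=b_j}^{a_k}\alpha_i$ agrees with the one in the paper; but the two proofs are organised quite differently. The paper never writes down $w^{-1}(\alpha)$ directly in the general case. Instead it compares in three steps: $(w'_1)^{-1}(\alpha) > w_q^{-1}(\alpha)$ (using $b_j\geq j+1$), then $w_q^{-1}(\alpha)\geq v^{-1}(\alpha)$ by invoking the already-proved Claim for the standard-ordering decomposition of the tail $v=w_{j,a_j}\cdots w_{r,a_r}$, and finally $v^{-1}(\alpha)=w^{-1}(\alpha)$ by Lemma~\ref{lem:commute}. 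Your approach short-circuits all this by working out $w^{-1}(\alpha_{a_j})=-\sum_{l=j}^{r+a_j-j}\alpha_l$ from the one-line permutation model of $\SL_{n+1}$, which gives the inequality and the precise defect $\sum_{l=j}^{b_j-1}\alpha_l+\sum_{l=a_k+1}^{r+a_j-j}\alpha_l$ in one stroke; this is arguably more transparent since it makes visible the two independent sources of strictness ($b_j>j$ and $k<r$).

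The second genuine difference is in establishing $b_j>j$ when $q\geq 2$. You propose a staircase argument in the quiver $Q_w$ from $i_{q-1}$ down to the bottom vertex of block $j_q$, which should indeed work but requires verifying a chain of arrow conditions (including the $i<i'<s(i)$ constraint at each step). The paper obtains this much faster from its Lemma~\ref{lem:commute}: since $w'_1$ must commute with $w_{q-1}$, and $s_{j_q-1}\leq w_{q-1}$ does not commute with $s_{j_q}$, one cannot have $s_{j_q}\leq w'_1$, whence $b_j\geq j+1$. If you want to keep your direct-computation structure, you can simply replace the staircase by this two-line commutation argument and avoid the obstacle you flagged at the end.
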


\begin{proof}
We first claim that the above proposition holds for the standard increasing 
ordering 
$1=j_{1} < j_{2} < \cdots < j_{m}\leq r$ of the elements of $J(w)$.

Let $\hw = (w_1, w_2, \ldots , w_m)$ be the 
generalised reduced decomposition of $w$ obtained by construction 1 
corresponding to this ordering. By this construction, we have 
$w_{1}=w_{1, a_{1}}w_{2, a_{2}}\cdots w_{j_{2}-1, a_{j_{2}-1}}$. 
Therefore, we have $\alpha=\alpha_{a_{1}}$. Further, we have 
\[ w_{1}^{-1}(\alpha_{a_{1}})=-\sum_{i=1}^{a_{j_{2}-1}}\alpha_{i}. \]
On the other hand, we have 
\[ w^{-1}(\alpha_{a_{1}})=-\sum_{i=1}^{a_{j_{2}-1}+r+1-j_{2}}\alpha_{i}. \]
Thus, we have $w_{1}^{-1}(\alpha) \geq  w^{-1}(\alpha)$.
Also, if $m\neq 1$, then we have $j_{2}\leq r$ and hence we have 
$r+1-j_{2}\geq 1$.
Hence we have $w_{1}^{-1}(\alpha) >  w^{-1}(\alpha)$, proving the claim. 

We now prove the proposition for an arbitrary ordering of the elements 
of $J(w)$.

let $j\in J(w)$ be the first element in this ordering. Let $1\leq q \leq m$ 
be the integer such that $j=j_{q}$. By the claim, we may assume that 
$q\neq 1$. By construction 1, we have $w_{q}=w'_{1}v$ for some $v\in W$ 
such that ${\ell}(w_{q})={\ell}(w'_{1})+{\ell}(v)$.

Let $k=j_{q+1}-1$. Then by construction 1, there exists a sequence 
$l\leq b_{l} \leq a_{l}$ ($j \leq l \leq k$) of positive integers such that 
$b_{l+1}=b_{l}+1$ for all $j\leq l \leq k-1$, and 
$w'_{1}=w_{b_{j},a_{j}}w_{b_{j+1},a_{j+1}}\cdots w_{b_{k},a_{k}}$. 
Further, we have $a_{l+1}=a_{l}+1$ for all $j\leq l \leq k-1$. 
By Lemma \ref{lem:commute}, 
$w'_{1}$ commutes with $w_{l}$ for all $1\leq l \leq q-1$.
Therefore, we have $b_j \geq j+1$.

Now, we have $\alpha=\alpha_{a_{j}}$. Therefore, 
\[ (w'_{1})^{-1}(\alpha_{a_{j}})= 
-\sum_{i=b_{j}}^{a_{k}}\alpha_{i}. \]  
On the other hand, we have 
\[ w_{q}^{-1}(\alpha_{a_{j}}) = -\sum_{i=j}^{a_{k}}\alpha_{i}. \]

Since $b_{j}\geq j+1$, we obtain
\begin{equation}\label{eqn:w'w} 
(w'_{1})^{-1}(\alpha_{a_{j}}) > w_{q}^{-1}(\alpha_{a_{j}}). 
\end{equation} 
Applying the claim to the generalised reduced decomposition 
$\hat{v}=(w_{q}, w_{q+1}, \cdots , w_{m})$  of 
$v=w_{j, a_{j}}w_{j+1, a_{j+1}}\cdots w_{r, a_{r}}$ 
obtained by construction 1 for the standard increasing ordering 
of $\{j_{q}, j_{q+1}, \cdots, j_{m}\}$, we have 
\begin{equation}\label{eqn:wqv} 
 w_{q}^{-1}(\alpha_{a_{j}})\geq v^{-1}(\alpha_{a_{j}}).
\end{equation} 

Again by Lemma \ref{lem:commute}, we see that $s_{a_{j}}$ 
commutes with $w_{i}$ for all $1\leq i \leq  q-1$. Therefore, we have 
\begin{equation}\label{eqn:wv} 
w^{-1}(\alpha_{a_{j}})=v^{-1}(\alpha_{a_{j}}).
\end{equation}

Now, the proof of the lemma follows from (\ref{eqn:w'w}), 
(\ref{eqn:wqv}) and (\ref{eqn:wv}).
\end{proof}

\subsection{Equality of Weyl groups in type $D_n$}
\label{subsec:weylD}

Let $G$ be of type $D_n$, where $n \geq 4$; then $G = \Spin_{2n}$. 
We order the simple roots as in \cite[p.~58]{HU}; then the minuscule 
simple roots are $\alpha_1$, $\alpha_{n-1}$ and $\alpha_n$.
We will obtain a slightly weaker version of Theorem \ref{thm:WMG} 
in this setting:

\begin{theorem}\label{thm:WMD}
We have
$W(G_{w_1,\hx}, T_{w_1}) = W(G_{w_1,w_1 x_1 }, T_{w_1})$.
\end{theorem}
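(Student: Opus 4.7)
The strategy is to mirror the argument used in the proof of Theorem \ref{thm:WMG}. By Lemma \ref{lemMT}(5) we have $W(G_{w_1, \hx}, T_{w_1}) = W(G_{w_1, wx}, T_{w_1})$, and by Lemma \ref{lemMT}(4) we have $W(G_{w_1, \hx}, T_{w_1}) \subset W(G_{w_1, w_1 x_1}, T_{w_1})$. Thus the theorem reduces to proving the reverse inclusion $W(G_{w_1, w_1 x_1}, T_{w_1}) \subset W(G_{w_1, wx}, T_{w_1})$, and this is the content I would focus on.

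To establish this inclusion, I would proceed by a case analysis according to the minuscule parabolic $P = P^w$. In type $D_n$ the minuscule simple roots are $\alpha_1$, $\alpha_{n-1}$ and $\alpha_n$; since the last two are exchanged by the diagram automorphism, it suffices to treat $P = P^{\alpha_1}$ (the quadric case) and $P = P^{\alpha_n}$ (the spinor variety case). For each of these two cases I would introduce explicit reduced decompositions of minuscule $w \in W^I$, analogous to the products of elements $w_{i, a_i}$ used in Subsection \ref{subsec:weylA}, and describe the quiver $Q_w$ together with $\Peaks(Q_w)$ in explicit combinatorial terms (using the fact that in the spinor case the quiver branches in a controlled way past the node $\alpha_{n-2}$).

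For an arbitrary ordering of $\Peaks(Q_w)$, I would then identify the element $w_1'$ produced by construction 1, and exhibit a set of generators of $W(G_{w_1', w_1' x_1'}, T_{w_1'})$. For those generators that are simple reflections $s_i \leq w_1'$, Lemma \ref{lemsr} immediately places them in $W(G_{w_1', wx}, T_{w_1'})$, exactly as in type $A_n$. The extra work, and the reason for the restricted formulation of the statement, is that in type $D_n$ the group $W(G_{w_1', w_1' x_1'}, T_{w_1'})$ need not be generated by simple reflections of $W$: additional generators appear, coming from the branching of the Dynkin diagram at $\alpha_{n-2}$ (see Remark \ref{rem:WMD}).

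The main obstacle is therefore the handling of these non-simple-reflection generators. For each of them, I would verify directly, using commutation relations in the spirit of Lemmas \ref{lemcombA}--\ref{lemc2A} adapted to the branching situation in type $D_n$, that the generator $v$ satisfies $v w x = wx$. Concretely, I expect the verification to reduce to identifying, for each candidate generator $v$, an expression $v w_1' = w_1' u$ with $u$ a product of simple reflections avoiding a distinguished index (the analog of $b_k$ in type $A_n$), and then transporting this identity through the tail $w_2' \cdots w_m'$ of the decomposition. This explicit verification, carried out separately for the quadric and the spinor variety, constitutes the technical heart of the proof.
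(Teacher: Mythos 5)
Your reduction to $W(G_{w_1, w_1 x_1}, T_{w_1}) \subset W(G_{w_1, wx}, T_{w_1})$ via Lemma~\ref{lemMT}, the split between $P^{\alpha_1}$ and $P^{\alpha_n}$, the use of explicit factorisations into the blocks $v_{i,a_i}$, and the commutation lemmas are all in line with the paper's proof. Where your plan diverges, and where I see a gap, is the step ``exhibit a set of generators of $W(G_{w_1', w_1' x_1'}, T_{w_1'})$ and verify each non-simple one separately.'' You never say how you would produce that generating set. The difficulty is real: Remark~\ref{rem:WMD} shows this group need not be generated by simple reflections, and there is no clean classification of the extra reflections available a priori. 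The paper avoids ever listing generators. For the standard decreasing ordering of $J(w)$, it takes an \emph{arbitrary} $v$ with $v w_1 x_1 = w_1 x_1$, writes $v w_1 = w_1 u$ with $u \in W(G_{w_1}, T_{w_1})$ satisfying the no--$s_n$ (or no--$s_{n-1}$) constraint, and inducts on $\ell(u)$ to show $u \cdot (v_{r-1,a_{r-1}} \cdots v_{1,a_1}) \equiv v_{r-1,a_{r-1}} \cdots v_{1,a_1} \pmod{W^{\alpha_n}}$, which directly gives $vwx = wx$. All the bookkeeping happens on the $u$-side (where one does peel off simple reflections one at a time, via Lemma~\ref{lemcombD}), so the potentially non-simple $v$ never needs to be decomposed.

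You also miss the structural dichotomy that makes the non-standard ordering tractable. When $q \neq 1$, the paper observes (Proposition~\ref{propWMGD}) that $w_1'$ commutes with $w_1$, and since $w_1$ contains $s_{n-1}$ or $s_n$, one gets $s_l \nleq w_1'$ for $n-2 \leq l \leq n$; so $\Supp(w_1')$ lies in a type $A$ subdiagram, and the type $A_n$ argument (Lemma~\ref{lemWM}) applies verbatim — in that case the group \emph{is} generated by simple reflections. Thus the ``branching'' obstruction you flag only occurs for the standard ordering, where the paper's modular induction disposes of it without generator bookkeeping. If you want to salvage your plan, replace ``exhibit generators and check the non-simple ones'' by the uniform induction on $\ell(u)$ modulo $W^{\alpha_n}$ for $q = 1$, and invoke the type-$A$ reduction for $q \neq 1$.
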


\begin{remark}\label{rem:WMD}
The group $W(G_{w_1, w_1 x_1}, T_{w_1})$ is not 
necessarily generated by simple reflections. For example,  take $n=8$ and
$w = (s_4 s_5 s_6 s_8)(s_3 s_4 s_5 s_6 s_7)(s_1 s_2 s_3 s_4 s_5 s_6 s_8)$. 
Then, we have $w_{1}=(s_{4}s_{5}s_{6}s_{8})(s_{3}s_{4}s_{5}s_{6}s_{7})$
for the standard ordering of $\Peaks(Q_w)$. Let $\beta=\sum_{i=4}^7 \alpha_i$.
Then $v=s_{\beta} \in W(G_{w_1, w_1 x_1}, T_{w_1})$ but 
$v$ is not a product of simple reflections in this group.
\end{remark}

To show Theorem \ref{thm:WMD}, 
it suffices to consider the cases where $P = P^{\alpha_{1}}$, 
$P = P^{\alpha_{n}}$, since there is an automorphism of the Dynkin diagram 
of $G$ sending $\alpha_{n-1}$ to $\alpha_{n}$.

We begin with the easy case where $P = P^{\alpha_1}$. 
By arguing as in the beginning of the proof of Lemma \ref{lemWM}, 
it suffices to prove that 
$W(G_{w_1, w_1 x_1}, T_{w_1}) \subset W(G_{w_1, wx}, T_{w_1})$. 

If there is a unique peak of $Q_{w}$, then by construction 1, 
we have $w=w_{1}$ and so we are done. Otherwise, by the same construction, 
we have $w=s_{n}s_{n-1}s_{n-2}\cdots s_{1}$. 
Since there is an automorphism 
of the Dynkin diagram of $G$ sending $\alpha_{n}$ to $\alpha_{n-1}$,
without loss of generality, we may assume that $w_{1}=s_{n}$ and 
$w_{2}=s_{n-1}s_{n-2}\cdots s_{1}$. Therefore, we have 
$W(G_{w_1}, T_{w_1}) = W(G_{w_1}, T_{w_1}) = \{1, s_{n}\}$. 
Thus, identity is the only element of $W(G_{w_1}, T_{w_1})$ 
that fixes $w_1 x_1$. So, we are done.

We now turn to the case where $P = P^{\alpha_n} = P_I$, where
$I = S \setminus \{ \alpha_n \}$.
For $1\leq i \leq n-1$, $1\leq a_{i}\leq n-2$, let 
\[ v_{i,a_{i}}=\begin{cases} 
s_{a_{i}}s_{a_{i}+1}\cdots s_{n-2}s_{n} & \mbox{if} 
~ i ~  \mbox{is odd}, \\ 
s_{a_{i}}s_{a_{i+1}}\cdots s_{n-2}s_{n-1} & \mbox{if} ~ i ~ \mbox{is even}.\\ 
\end{cases} \]

Let $v_{i,n}=s_{n}$ if $i$ is odd, $v_{i, n-1}=s_{n-1}$ if $i$ is even.

\begin{lemma}\label{lemWlong}
The minimal representative $w_0^I\in W^I$
of the longest element $w_0 \in W$ is of the form 
$w_0^I = \begin{cases} s_{n-1}(v_{n-2,n-2}v_{n-3,n-3}\cdots v_{1,1}) 
& \mbox{if} ~ n ~  \mbox{is odd}, \\ 
s_{n}(v_{n-2,n-2}v_{n-3,n-3}\cdots v_{1,1}) 
& \mbox{if} ~ n ~ \mbox{is even}.\\ \end{cases}$ 

Further, $(s_{n-1}, v_{n-2,n-2}, v_{n-3,n-3}, \ldots , v_{1,1})$ (respectively,  
$(s_{n}, v_{n-2,n-2}, v_{n-3,n-3}, \ldots ,  v_{1,1})$) is a generalised reduced 
decomposition of $w_0^I$ if $n$ is odd (respectively, if $n$ is even).
\end{lemma}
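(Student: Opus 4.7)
The plan is to verify the claim by a length count, a check that the candidate lies in $W^I$, and an appeal to uniqueness of the longest element of $W^I$; the generalised reduced decomposition assertion will follow from the length count.

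\textbf{Step 1 (Length).} Each $v_{i,i}$ is spelled out as a word of $n-i$ simple reflections. Including the leading $s_{n-1}$ or $s_n$, the total word length is
$$1 + \sum_{i=1}^{n-2}(n-i) = \frac{n(n-1)}{2}.$$
Since $W_I$ is of type $A_{n-1}$, we have $\ell(w_{0,I}) = \binom{n}{2}$, hence $\ell(w_0^I) = \ell(w_0) - \ell(w_{0,I}) = n(n-1) - \binom{n}{2} = \frac{n(n-1)}{2}$, which matches.

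\textbf{Step 2 (Membership in $W^I$).} I would argue by induction on $n$ with a small base case (say $n=4$) checked directly. The tail $v_{n-3,n-3} \cdots v_{1,1}$ involves only $s_1, \ldots, s_{n-3}$ and $s_{n-1}, s_n$; embedding the subgroup these generate into the Weyl group of a $D_{n-1}$-subsystem (with simple roots $\alpha_2,\ldots,\alpha_n$, relabeled appropriately) allows the inductive hypothesis to control the tail's action on the simple roots $\alpha_j$ with $j \leq n-3$. It then remains to check that the leading piece (either $s_{n-1}\cdot v_{n-2,n-2}$ or $s_n \cdot v_{n-2,n-2}$, according to parity) preserves positivity of the simple roots $\alpha_{n-2}, \alpha_{n-1}, \alpha_n$ near the fork, which is a finite direct computation. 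Alternatively, one may bypass the induction and propagate each $\alpha_j$ through the word right-to-left, using the braid relations in $W(D_n)$ and the commutation of $s_{n-1}$ with $s_n$.

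\textbf{Step 3 (Uniqueness and decomposition).} By Steps 1 and 2, the candidate belongs to $W^I$ and has length $\ell(w_0^I)$. Since $w_0^I$ is the unique element of $W^I$ of maximal length, the candidate equals $w_0^I$. The lengths of $s_{n-1}$ or $s_n$ and the factors $v_{i,i}$ (each manifestly reduced, as its simple reflections admit no braid cancellations) sum to $\ell(w_0^I)$, giving the additivity required for the given sequence to be a generalised reduced decomposition of $w_0^I$.

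The main obstacle is Step 2. Tracking positivity of simple roots through a long product of simple reflections, with a parity-dependent role for $s_{n-1}$ vs.\ $s_n$ at the fork of the $D_n$ diagram, requires careful case analysis; the inductive structure tames but does not eliminate it.
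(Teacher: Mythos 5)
Your Step 3 hinges on the claim that the candidate word $\tilde{w}$ ``has length $\ell(w_0^I)$,'' but neither Step 1 nor Step 2 establishes this. Step 1 only shows the word has $\binom{n}{2}$ \emph{letters}, giving the upper bound $\ell(\tilde{w}) \leq \binom{n}{2}$; Step 2 shows $\tilde{w} \in W^I$, which gives the same upper bound $\ell(\tilde{w}) \leq \ell(w_0^I) = \binom{n}{2}$. What is missing is the lower bound $\ell(\tilde{w}) \geq \binom{n}{2}$, i.e.\ that the word is actually reduced. Without reducedness one cannot invoke uniqueness of the longest element of $W^I$: membership in $W^I$ together with a non-reduced $\binom{n}{2}$-letter expression is perfectly compatible with $\tilde{w}$ being some shorter element. (Noting that each factor $v_{i,i}$ is separately reduced does not help, since a product of reduced words need not be length-additive.) To close this you would need either to track the full inversion set of $\tilde{w}$ and show it equals $R^+ \setminus R_I^+$, or to show length-additivity of the factorization inductively.

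By contrast, the paper's proof also proceeds by induction on $n$ via the $D_{n-1}$ subsystem $G_{S\setminus\{\alpha_1\}}$ and the diagram automorphism swapping $\alpha_{n-1}$ and $\alpha_n$, but its key lemma is a root count rather than a positivity check: it partitions the positive roots involving $\alpha_n$ according to whether the coefficient of $\alpha_1$ is zero or not, identifies the first set with $\ell(v)$ (via the automorphism and the inductive hypothesis for $D_{n-1}$) and the second with $\ell(v_{1,1})$, and thereby obtains $\ell(w_0^I) = \ell(v) + \ell(v_{1,1})$ directly. This counting sidesteps the need to propagate individual simple roots through the word. Your outline of Step 2, if carried through carefully, could recover this, but as written it only addresses $W^I$-membership and leaves the essential reducedness unverified.
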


\begin{proof}
By induction on $n$ (=rank$(G)$), since there is an automorphism 
of the Dynkin diagram of $G$ sending $\alpha_{n-1}$ to $\alpha_{n}$, 
the minimal representative
\[ v \in W(G_{S\setminus \{\alpha_{1}\}}, 
G_{S\setminus \{\alpha_{1}\}}\cap T)^{S\setminus \{\alpha_{1}, \alpha_{n-1}\}} \] 
of the longest element in 
$W(G_{S\setminus \{\alpha_{1}\}}, G_{S\setminus \{\alpha_{1}\}}\cap T)$
is of the form 
\[ v = \begin{cases} 
s_{n-1}(v_{n-2,n-2}v_{n-3,n-3}\cdots v_{2,2}) & \mbox{if} ~ n ~  \mbox{is odd}, \\ 
s_{n}(v_{n-2,n-2}v_{n-3,n-3}\cdots v_{2,2}) 
& \mbox{if} ~ n ~ \mbox{is even}.\\ \end{cases} \] 

Again since there is an automorphism of the Dynkin diagram of $G$ 
sending $\alpha_{n-1}$ to $\alpha_{n}$, the number of positive roots of the form 
$\sum_{i\neq 1}m_{i}\alpha_{i}$, with $m_{n}\geq 1$ is equal to 
$\ell(v)$.
On the other hand, the number of positive roots 
$\sum_{i=1}^{n}m_{i}\alpha_{i}$, with $m_{n}\geq 1$ and $m_{1}\geq 1$ 
is equal to $\ell(v_{1,1})$.

Therefore, the number of positive roots of the form 
$\sum_{i}m_{i}\alpha_{i}$ with $m_{n}\geq 1$ is equal to 
${\ell}(v)+{\ell}(v_{1,1})$. 

Thus, we have  ${\ell}(w_0^I)={\ell}(v)+{\ell}(v_{1,1})$. 
This implies the assertion.
\end{proof}

Let $w\in W^I$ be such that $\Supp(w)=S$. By Lemma \ref{lemWlong} 
there exists a unique increasing sequence 
$1=a_{1} < a_{2} < \ldots < a_{k}\leq n$ of integers such that 
$w=v_{k,a_{k}}v_{k-1,a_{k-1}}\cdots v_{1,a_{1}}$. Further, again 
by Lemma \ref{lemWlong}, 
$(v_{k,a_{k}}, v_{k-1,a_{k-1}}, \ldots , v_{1,a_{1}})$ 
is a generalised reduced decomposition of $w$.

Let 
$J(w):=\{1\leq j \leq k-1: 
\langle \alpha_{a_{j+1}}, \alpha_{a_{j}}^{\vee} \rangle = 0\} \cup \{k\}$ 
and let $|J(w)|=m$. The peaks of the quiver $Q_{w}$ 
are indexed by the elements of $J(w)$. So, let $\{i_{j} : j\in J(w)\}$ 
be the peaks of $Q_{w}$. Then we have $\beta(i_{j})=\alpha_{a_{j}}$ 
for all $j\in J(w)$. Let $j_{1}=k > \cdots > j_{m}$ be the standard 
decreasing ordering of the elements of $J(w)$, that is; the ordering 
induced by the decreasing ordering of positive integers. Let 
$\hw = (w_1, w_2, \ldots , w_m)$ be the generalised reduced 
decomposition corresponding to this ordering of $\Peaks(Q_{w})$ 
obtained by construction 1. 

\begin{lemma}\label{lemcombD}
Let $1\leq l \leq k$ be a positive integer. 
\begin{enumerate}
\item 
Assume that $l$ is even. Then for any integer $1+a_{l}\leq  i \leq n-1$, 
we have $s_{i}v_{l,a_{l}}=v_{l,a_{l}}s_{i-1}$. \\
\item Assume that $l$ is odd. Then for any integer $1+a_{l}\leq  i \leq n-2$, 
we have $s_{i}v_{l,a_{l}}=v_{l,a_{l}}s_{i-1}$, and 
$s_{n}v_{l,a_{l}}=v_{l,a_{l}}s_{n-2}$. \\
\end{enumerate}
\end{lemma}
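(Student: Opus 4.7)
The plan is to reduce both parts of the lemma to a single \emph{$A$-type string commutation identity}: for integers $a \leq b$ and $a+1 \leq i \leq b$, one has $s_i (s_a s_{a+1} \cdots s_b) = (s_a s_{a+1} \cdots s_b)\, s_{i-1}$ in the Weyl group. This is a short computation using the braid relation $s_i s_{i-1} s_i = s_{i-1} s_i s_{i-1}$ together with the commutation $s_i s_j = s_j s_i$ for $|i-j|\geq 2$. The trick is to split the string as $(s_a \cdots s_{i-2})(s_{i-1} s_i)(s_{i+1} \cdots s_b)$: the first block commutes with $s_i$; the middle block absorbs $s_i$ via the braid move to produce $s_{i-1} s_i s_{i-1}$; and the trailing $s_{i-1}$ then commutes past $s_{i+1} \cdots s_b$ to land on the right.

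For part (1), with $l$ even we have $v_{l,a_l} = s_{a_l} s_{a_l+1} \cdots s_{n-2} s_{n-1}$, a reduced word supported on simple reflections $s_{a_l},\ldots, s_{n-1}$ that belong to an $A_{n-1}$-subsystem (no $s_n$ appears). Applying the string identity with $a = a_l$ and $b = n-1$ immediately gives $s_i v_{l,a_l} = v_{l,a_l} s_{i-1}$ for all $a_l + 1 \leq i \leq n-1$.

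For part (2), with $l$ odd we have $v_{l,a_l} = s_{a_l} \cdots s_{n-2} s_n$. For $a_l + 1 \leq i \leq n-2$, the reflection $s_i$ commutes with $s_n$ (in the Dynkin diagram of $D_n$, node $n$ is adjacent only to node $n-2$), so the trailing $s_n$ does not interfere; applying the string identity to the $A$-type portion $s_{a_l} \cdots s_{n-2}$ yields $s_i v_{l,a_l} = (s_{a_l} \cdots s_{n-2})\, s_{i-1}\, s_n$, and since $i-1 \leq n-3$ the factor $s_{i-1}$ also commutes with $s_n$, producing $v_{l,a_l}\, s_{i-1}$. For the final case $i = n$, I exploit that $s_n$ commutes with $s_{a_l}, \ldots, s_{n-3}$ and that $\langle \alpha_n^\vee, \alpha_{n-2} \rangle = -1$, so $s_n s_{n-2} s_n = s_{n-2} s_n s_{n-2}$; sliding $s_n$ past the initial segment and then braiding gives $s_n v_{l,a_l} = (s_{a_l} \cdots s_{n-3})\, s_n s_{n-2} s_n = (s_{a_l} \cdots s_{n-3})\, s_{n-2} s_n s_{n-2} = v_{l,a_l}\, s_{n-2}$.

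There is essentially no obstacle: the proof is a mechanical verification once the string identity is in hand. The only mild subtlety to watch is the asymmetric role of node $n$ in the $D_n$ Dynkin diagram: $s_n$ commutes with $s_{n-1}$ but braids with $s_{n-2}$, which is precisely what forces the shift $i \mapsto n-2$ (rather than $i \mapsto n-1$) on the right-hand side in the $i = n$ case.
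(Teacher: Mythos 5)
Your proof is correct and takes essentially the same route as the paper: you chase $s_i$ through the word using commutations on either side of a single braid move $s_i s_{i-1} s_i = s_{i-1} s_i s_{i-1}$, which you package as a general $A$-type string identity, while the paper lists the same commutation and braid relations directly. One slip worth fixing: in part (2) the parenthetical claim that ``$s_i$ commutes with $s_n$ for $a_l+1 \le i \le n-2$'' is false at $i = n-2$, since nodes $n-2$ and $n$ are adjacent in the $D_n$ diagram and $s_{n-2}$ braids with $s_n$ rather than commuting; your actual computation is unaffected because you apply the string identity only to the segment $s_{a_l}\cdots s_{n-2}$ and then use that $s_{i-1}$ commutes with $s_n$, which does hold since $i-1 \le n-3$, but the parenthetical should be removed or restricted to $i \le n-3$.
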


\begin{proof}
We consider the case where $l$ is even. The proof of the case where $l$ is odd 
is similar.

Since $s_{i}s_{j}=s_{j}s_{i}$ for all $1+a_{l} \leq j \leq i-2$, $s_{i}s_{i-1}s_{i}=s_{i-1}s_{i}s_{i-1}$ and $s_{i-1}s_{t}=s_{t}s_{i-1}$ for all 
$i+1\leq t\leq n-1$, we have $s_{i}v_{l, a_{l}}=v_{l,a_{l}}s_{i-1}$.

\end{proof}

Let $1\leq r \leq k$ be the least positive integer such that 
$\langle \alpha_{a_{j+1}} , \alpha_{a_{j}}^{\vee} \rangle \neq 0$ 
for all $r \leq j \leq k-1$. 
Then by construction 1, we have 
$w_{1}=v_{k,a_{k}}v_{k-1, a_{k-1}} \cdots v_{r,a_{r}}$.

We may now prove Theorem \ref{thm:WMD} for the generalised reduced 
decomposition associated to the above standard ordering.

By arguing as in the beginning of the proof of Lemma \ref{lemWM}, 
it suffices to prove that 
\[ W(G_{w_1, w_1 x_1}, T_{w_1}) \subset W(G_{w_1, wx}, T_{w_1}). \] 

Let $v\in W(G_{w_1}, T_{w_1})$  be such that 
$vw_{1}x_{1}=w_{1}x_{1}$. Then, we have $vw_{1}=w_{1}u$ for some 
$u\in W(G_{w_{1}}, T_{w_{1}})$ such that $s_{n}\nleq u$ if $r$ is odd 
and $s_{n-1}\nleq u$ if $r$ is even.

We prove by induction on $\ell(u)$ that 
\[ uv_{r-1,a_{r-1}}v_{r-2,a_{r-2}}\cdots v_{1,a_{1}}
= v_{r-1,a_{r-1}}v_{r-2,a_{r-2}}\cdots v_{1,a_{1}} ~ {\rm mod} ~ W^{\alpha_{n}}. \]

If $\ell(u)=1$, then we have $u=s_{t}$ for some integer $a_{r}\leq t \leq n$ and 
$t\neq n$ if $r$ is odd and $t\neq n-1$ if $r$ is even. Using Lemma 
\ref{lemcombD} repeatedly, for any such $t$, we see that  
$s_{t}v_{r-1,a_{r-1}}v_{r-2,a_{r-2}}\cdots v_{1,a_{1}}
=v_{r-1,a_{r-1}}v_{r-2,a_{r-2}}\cdots v_{1,a_{1}}s_{l}$ 
for some integer $l\neq n$.

So, assume that $\ell(u)\geq 2$. Choose an integer $a_{r}\leq t \leq n$  
such that $t\neq n$ if $r$ is odd and $t\neq n-1$ if $r$ is even such that 
$\ell(us_{t})=\ell(u)-1$. Since $\ell(us_{t})=\ell(u)-1$, by induction 
on $\ell(u)$, we see that 
$us_{t}v_{r-1,a_{r-1}}v_{r-2,a_{r-2}} \cdots v_{1,a_{1}}
= v_{r-1,a_{r-1}}v_{r-2,a_{r-2}}\cdots v_{1,a_{1}} ~{\rm mod}~ W^{\alpha_{n}}$. 
By the above discussion, we have 
$s_{t}v_{r-1,a_{r-1}}v_{r-2,a_{r-2}} \cdots v_{1,a_{1}}
= v_{r-1,a_{r-1}}v_{r-2,a_{r-2}} \cdots v_{1,a_{1}}s_{l}$ 
for some integer $l\neq n$.

Hence, we have 
$uv_{r-1,a_{r-1}}v_{r-2,a_{r-2}}\cdots v_{1,a_{1}}
=v_{r-1,a_{r-1}}v_{r-2,a_{r-2}}\cdots v_{1,a_{1}} ~{\rm mod}~ W^{\alpha_{n}}$. 
Therefore, we have 
$vw=w_{1}uv_{r-1,a_{r-1}}v_{r-2,a_{r-2}}\cdots v_{1,a_{1}}
= w ~{\rm mod}~ W^{\alpha_{n}}$. 
Thus, we obtain $vwx=wx$. This completes the proof for the above (special) 
generalised reduced decomposition.

Next, we turn to the general case. Recall that 
\[ J(w) =
\{1\leq j \leq k-1: \langle \alpha_{a_{j+1}}, \alpha_{a_{j}}^{\vee} \rangle = 0\}
\cup \{k\} \] 
and let $|J(w)|=m$.
Also,  let $\preceq$ be an arbitrary ordering of the elements of $J(w)$ and let 
$j\in J(w)$ be the first element in this ordering. Since $\Peaks(Q_{w})$ 
is indexed by $J(w)$, this ordering of $J(w)$ induces an ordering of 
$\Peaks(Q_{w})$. 

Let $\hw' = (w'_1, w'_2, \ldots , w'_m)$ be the generalised reduced 
decomposition of $w$ obtained by construction 1 corresponding 
to this ordering of $\Peaks(Q_{w})$. Let $1\leq q \leq m$ be the integer 
such that $j=j_{q}$. Again by construction 1, we have 
$w_{q}=w'_{1}v$ for some $v\in W$ such that 
$\ell(w_q)=\ell(w'_1)+\ell(v)$.

Let $\hX(\hw')$ be the variety corresponding to this ordering. Recall that 
$\hX(\hw)$ denotes the variety corresponding to the standard decreasing 
ordering of the elements of $J(w)$.

Recall from Lemma \ref{lemMT} that 
$W(G_{w'_1, \hx'}, T_{w'_1}) \subset W(G_{w'_1, w'_1 x'_1}, T_{w'_1}) \subset W$.

\begin{proposition}\label{propWMGD} 
Assume that $q\neq 1$. That is, $w_{1}\neq w'_{1}$. Then
\[ W(G_{w'_1, \hx'}, T_{w'_1}) = W(G_{w'_1, w'_1 x'_1}, T_{w'_1}) \] 
and this group is generated by simple reflections.
\end{proposition}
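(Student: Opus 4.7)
The plan is to adapt the arguments used above for the standard decreasing ordering in type $D_n$ to the present setting, paralleling the way the $A_n$ proof at the end of Subsection \ref{subsec:weylA} extends Lemma \ref{lemWM} to arbitrary orderings of peaks. I will first reduce the statement. By Lemma \ref{lemMT}(5), $W(G_{w'_1, \hx'}, T_{w'_1}) = W(G_{w'_1, wx}, T_{w'_1})$, and by Lemma \ref{lemMT}(4) the inclusion $W(G_{w'_1, \hx'}, T_{w'_1}) \subset W(G_{w'_1, w'_1 x'_1}, T_{w'_1})$ holds. Thus both conclusions of the proposition will follow once I show that every element of $W(G_{w'_1, w'_1 x'_1}, T_{w'_1})$ is a product of simple reflections, each of which fixes $wx$.

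Next, I will make explicit the structure of $w'_1$. Since $j = j_q$ is the first peak in the chosen ordering, construction 1 yields a factorization
\[
w'_1 = v_{j, a_j} \, v_{j-1, a_{j-1}} \cdots v_{s, a_s}
\]
for some $s \leq j$, where $\langle \alpha_{a_{l+1}}, \alpha_{a_l}^{\vee} \rangle \neq 0$ for $s \leq l \leq j-1$ (so that the indices $s, s+1, \ldots, j$ form a single consecutive chain of non-peaks in $J(w)$ below $j$). Lemma \ref{lem:commute} ensures that the $v_{l, a_l}$ corresponding to the peaks $j_1, \ldots, j_{q-1}$ chosen earlier in $\preceq$ commute past $w'_1$ and therefore do not appear in this product. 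In particular, $w'_1$ has the same shape as the $w_1$ that was treated for the standard decreasing ordering, and one has the single peak $\Peaks(Q_{w'_1}) = \{p\}$ with $\beta(p) = \alpha_{a_j}$.

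I will then prove the $D_n$ analog of Lemma \ref{lemc2A} for this $w'_1$. Applying Lemma \ref{lemcombD} iteratively along the product $v_{j, a_j} v_{j-1, a_{j-1}} \cdots v_{s, a_s}$, and respecting the parity conditions (odd $l$ versus even $l$) for each factor, I will show that for every simple reflection $s_t \leq w'_1$ with $t$ different from the distinguished index attached to the unique peak $p$, there exists a simple reflection $s_l \leq w'_1$, with $l$ also not equal to that distinguished index, satisfying $s_l w'_1 = w'_1 s_t$. This is the key commutation input needed to run the induction.

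Given the commutation analog, the conclusion follows by imitating the proof of Lemma \ref{lemWM} verbatim. Let $v \in W(G_{w'_1, w'_1 x'_1}, T_{w'_1})$ and write $v w'_1 = w'_1 u$ with $u \in W(G_{w'_1}, T_{w'_1})$ fixing $x'_1$; proceed by induction on $\ell(u)$. The base case $\ell(u) = 1$ is handled directly by the commutation analog. For $\ell(u) \geq 2$, choose $t$ with $\ell(u s_t) = \ell(u) - 1$ and $s_t$ avoiding the peak-index condition, apply the identity $v s_{w'_1(\alpha_t)} w'_1 = w'_1 u s_t$, invoke the induction hypothesis on $v s_{w'_1(\alpha_t)}$, and observe that $w'_1(\alpha_t) = \alpha_l$ is simple by the commutation analog. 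This expresses $v$ as a product of simple reflections $s_i \leq w'_1$ each satisfying $s_i w'_1 = w'_1 s_p$ with $p$ not the peak index. Lemma \ref{lemsr} then gives $s_i w x = w x$ for each such $s_i$, so $v w x = w x$, completing both the inclusion into $W(G_{w'_1, wx}, T_{w'_1})$ and the assertion about generation by simple reflections. The main obstacle is the first structural step: pinning down the explicit factorization of $w'_1$ and verifying that the parity/index hypotheses of Lemma \ref{lemcombD} persist along the truncated chain $v_{j, a_j}, \ldots, v_{s, a_s}$; once that bookkeeping is done, the rest of the argument is a transcription of the special-ordering proof given above.
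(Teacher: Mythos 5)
Your high-level reduction is correct: by Lemma \ref{lemMT}(4) and (5), it suffices to prove the inclusion $W(G_{w'_1, w'_1 x'_1}, T_{w'_1}) \subset W(G_{w'_1, wx}, T_{w'_1})$ by exhibiting generators of $W(G_{w'_1, w'_1 x'_1}, T_{w'_1})$ that are simple reflections fixing $wx$, and the closing induction imitating Lemma \ref{lemWM}, together with Lemma \ref{lemsr}, is sound. But the factorization of $w'_1$ you posit — $w'_1 = v_{j, a_j}\, v_{j-1, a_{j-1}} \cdots v_{s, a_s}$, a product of \emph{full} elements $v_{l, a_l}$, each of which involves $s_{n-2}$ together with $s_n$ or $s_{n-1}$ — cannot be right when $q \neq 1$, and the gap propagates from there. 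Indeed, the crux of the paper's argument (which your proposal never states) is the observation, coming straight from Lemma \ref{lem:commute}, that $w'_1$ commutes with $w_1$; since $s_n \leq w_1$ or $s_{n-1} \leq w_1$, it follows that either $w'_1 \in \{s_{n-1}, s_n\}$ (trivial case) or $s_l \nleq w'_1$ for all $n-2 \leq l \leq n$, so the Dynkin diagram of $G_{w'_1}$ is of type $A$. Without this, the whole strategy of ``prove a $D_n$ analog of Lemma \ref{lemc2A} by iterating Lemma \ref{lemcombD} over the $v_{l,a_l}$ factors, tracking parities'' is misdirected: there is nothing of type $D_n$ left to control.

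Concretely, when $q \neq 1$ the correct factorization is the truncated one given in the paper: with $h = j_{q+1}$, one has $(w'_1)^{-1} = w_{a_{h+1}, b_{h+1}}\, w_{a_{h+2}, b_{h+2}} \cdots w_{a_j, b_j}$ where the $b_s \leq n-3$, all strings lying entirely within the type-$A$ part of the diagram. The needed commutation ``Observation'' then follows immediately from the already proved type-$A$ Lemma \ref{lemcA}; no new $D_n$-specific lemma is needed, and Lemma \ref{lemcombD} (which concerns the full elements $v_{l,a_l}$ and their parity-dependent shapes) plays no role in this case. So the missing idea in your proposal is precisely the reduction to type $A$ via commutativity with $w_1$; once you supply it, the remainder of your outline (base case via the commutation observation, inductive step using $v s_{w'_1(\alpha_t)} w'_1 = w'_1 u s_t$, conclusion via Lemma \ref{lemsr}) does close the argument.
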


\begin{proof}
As at the beginning of the proof of Lemma \ref{lemWM}, it suffices to prove 
the inclusion 
\[ W(G_{w'_1, w'_1 x'_1}, T_{w'_1 }) \subset W(G_{w'_1, wx}, T_{w'_1}). \] 
Recall the definition of $Q_w(A)$ for any $A \subset \Peaks(Q_w)$ from
Definition \ref{def:subquiver}. 
By construction 1, for any vertex $l$ of $Q_{w}(\Peaks(Q_{w})\setminus \{i_j\})$ 
and for any vertex of $i$ of $Q_{w}(\{i_j\})$, we have $(l,i)\notin R$. 
In particular, we have $\langle \beta(l)^{\vee} , \beta(i) \rangle =0$. 
Thus, every simple reflection $s_{e}\leq w'_{1}$ commutes with every simple 
reflection $s_{f}\leq w_r$ for all $1 \leq r \leq q -1$. 
Hence, $w'_{1}$ commutes with $w_{1}$. 
Further, either $s_{n}\leq w_{1}$ or $s_{n-1}\leq w_{1}$. If $w'_{1}=s_{n-1}$ 
or $w'_{1}=s_{n}$ we are done. Otherwise, we have $s_{l}\nleq w'_{1}$ 
for all $n-2\leq l \leq n$. Therefore the Dynkin diagram of $G_{w'_1}$ 
is of type $A$. 

Let $h=j_{q+1}$. By construction 1, we have $a_{s+1}=a_{s}+1$ for all 
$h+1\leq s \leq j-1$. Again, by construction 1, there exists 
a sequence $a_{s}\leq b_{s} \leq n-3$  ($h+1\leq s \leq j$) of positive integers 
with $b_{s+1}=1+b_{s}$ for all $h+1 \leq s \leq j-1$ such that 
$(w'_{1})^{-1} = w_{a_{h+1}, b_{h+1}} w_{a_{h+2}, b_{h+2}}\cdots w_{a_{j}, b_{j}}$.

Therefore, by Lemma \ref{lemcA}, for any integer $a_{h+1}\leq t \leq b_{j}$ 
different from $b_{h+1}$, we have $s_{t}(w'_{1})^{-1}=(w'_{1})^{-1}s_{p}$ 
for some integer $p\neq a_{j}$ such that $s_{p}\leq w'$.

Hence we have 

{\it Observation}. For any $a_{h+1}\leq t \leq b_{j}$ different from $b_{h+1}$, 
we have $w'_{1}s_{t}=s_{p}w'_{1}$ for some integer $p\neq a_{j}$ such that 
$s_{p}\leq w'$.

Therefore, we can imitate the proof of Lemma \ref{lemWM}.

{\it Claim}. $W(G_{w'_1, w'_1 x'_1}, T_{w'_1})$ 
is generated by simple reflections.

Let $v\in W(G_{w'_1}, T_{w'_1})$ be such that 
$vw'_{1}=w'_{1}u$ for some $u\in W(G_{w'_1}, T_{w'_1})$. 
We prove by induction on ${\ell}(u)$ that $v$ is a product of simple reflections 
$s_{p}\leq w'_{1}$ such that $s_{p}w'_{1}=w'_{1}s_{t}$ for some integer 
$t\neq b_{h+1}$ with  $s_{t}\leq w'_{1}$. 

If ${\ell}(u)=1$, then we have $u=s_{t}$ for some integer $t\neq b_{h+1}$ 
such that $s_{t}\leq w'_{1}$. Hence by Observation, we have $v=s_{p}$ 
for some integer $p\neq a_{j}$ such that $s_{p}\leq w'_{1}$. 

So, assume that ${\ell}(u)\geq 2$. Choose an integer $t\neq b_{h+1}$ 
such that $s_{t}\leq w'_{1}$ and ${\ell}(us_{t})={\ell}(u)-1$. 

We have $vs_{w'_{1}(\alpha_{t})}w'_{1}=vw'_{1}s_{t}=w'_{1}us_{t}$. 
Note that $w'_{1}(\alpha_{t})\in R^{+}\setminus  v^{-1}(R^{+})$. 
Therefore $s_{w'_{1}(\alpha_{t})}\in W(G_{w'_{1}}, T_{w'_{1}})$. 
Since ${\ell}(us_{t})={\ell}(u)-1$, by induction $vs_{w'_{1}(\alpha_{t})}$ 
is a product of simple reflections $s_{p}\leq w'_{1}$ such that 
$s_{p}w'_{1}=w'_{1}s_{l}$ for some integer $l\neq b_{h+1}$ such that 
$s_{l}\leq w'_{1}$.

On the other hand, by Observation, we have $w'_{1}s_{t}=s_{e}w'_{1}$ 
for some integer $e\neq a_{j}$ such that $s_{e}\leq w'_{1}$. Therefore, 
we have  $w'_{1}(\alpha_{t})=\alpha_{e}$ and so $v$ is a product 
of simple reflections $s_{p}\leq w'_{1}$ such that $s_{p}w'_{1}=w'_{1}s_{l}$
for some integer $l\neq b_{h+1}$ such that $s_{l}\leq w'_{1}$.

This proves the claim.

Now, let $v\in W(G_{w'_1,w'_1 x'_1}, T_{w'_1})$. 
Then by the claim, $v$ is a product of simple reflections $s_{p}\leq w'_{1}$ 
such that $s_{p}w'_{1}=w'_{1}s_{l}$ for some integer $l\neq b_{h+1}$ 
such that $s_{l}\leq w'_{1}$. On the other hand,  by Lemma \ref{lemsr}, 
for any such $s_{p}$, we have $s_{p}wx=wx$ and hence we have $vwx=wx$.
\end{proof}

\subsection{Root inequality in type $D_n$}
\label{subsec:rootD} 

We keep the notation of Subsection \ref{subsec:weylD}
In particular, $w\in W$ denotes a minuscule element, 
$P:= P^w = P^{\alpha_r}$ for some $r=1, n-1, n$,
and $\preceq$ an arbitrary ordering of the elements of $J(w)$.
Let $\hw' = (w'_1, w'_2, \ldots , w'_m)$ be the generalised reduced 
decomposition of $w$ obtained by construction 1 for this ordering.
Then we have 

\begin{proposition}\label{prop:heightD}
Let $\alpha$ be the unique simple root such that $(w'_{1})^{-1}(\alpha)$ 
is a negative root (see \cite[Lem.~5.1]{BK}). Then we have 
$(w'_{1})^{-1}(\alpha) \geq  w^{-1}(\alpha)$. 
Further, $(w'_{1})^{-1}(\alpha) = w^{-1}(\alpha)$ if and only if $w = w'_{1}$; 
that is, $m=1$.
\end{proposition}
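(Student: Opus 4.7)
The plan is to adapt the argument used for Proposition \ref{prop:heightA} in type $A_n$. As for Theorem \ref{thm:WMD}, I first reduce to the case $P = P^{\alpha_n}$: if $P = P^{\alpha_1}$, the analysis preceding Proposition \ref{propWMGD} shows that either $m = 1$ (in which case there is nothing to prove) or $w = s_n s_{n-1} \cdots s_1$, where a direct check gives the result; the diagram automorphism exchanging $\alpha_{n-1}$ and $\alpha_n$ handles $P = P^{\alpha_{n-1}}$. With $P = P^{\alpha_n}$, Lemma \ref{lemWlong} yields a canonical form $w = v_{k,a_k} v_{k-1,a_{k-1}} \cdots v_{1,a_1}$ with $1 = a_1 < a_2 < \cdots < a_k \leq n$, and $\Peaks(Q_w)$ is indexed by $J(w) \subset \{1, \ldots, k\}$.

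First I would treat the standard decreasing ordering $j_1 = k > j_2 > \cdots > j_m$ of $J(w)$, with associated decomposition $\hw = (w_1, \ldots, w_m)$. Here $w_1 = v_{k,a_k} v_{k-1,a_{k-1}} \cdots v_{r,a_r}$, where $r$ is the least integer with $\langle \alpha_{a_{j+1}}, \alpha_{a_j}^\vee\rangle \neq 0$ for all $r \leq j \leq k-1$, and $\alpha = \alpha_{a_k}$. Iterating the commutation identities of Lemma \ref{lemcombD} produces an explicit expression for $w_1^{-1}(\alpha_{a_k})$ as the negative of a sum of simple roots supported on $\{\alpha_{a_r}, \ldots, \alpha_{n-2}\}$ together with one of $\alpha_{n-1}$, $\alpha_n$ (depending on the parities of the indices of the $v_{i,a_i}$). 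The analogous formula for $w^{-1}(\alpha_{a_k})$ is obtained by appending the further factors $v_{r-1,a_{r-1}} \cdots v_{1,a_1}$; each such factor contributes additional simple roots with negative coefficient, so $w^{-1}(\alpha_{a_k})$ is strictly smaller than $w_1^{-1}(\alpha_{a_k})$ precisely when $r \geq 2$, that is, when $m \geq 2$. This establishes the proposition for the standard decreasing ordering.

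For an arbitrary ordering $\preceq$ with first element $j = j_q$, let $\hw' = (w'_1, \ldots, w'_m)$ be the associated decomposition obtained from construction 1; I may assume $q \geq 2$. By construction 1, $w_q = w'_1 v$ with $\ell(w_q) = \ell(w'_1) + \ell(v)$, and $\alpha = \alpha_{a_j}$. By Lemma \ref{lem:commute}, every simple reflection occurring in $w'_1$ commutes with every simple reflection occurring in $w_l$ for $1 \leq l \leq q-1$; in particular $s_{a_j}$ commutes with $w_1, \ldots, w_{q-1}$, so
\[
w^{-1}(\alpha) \;=\; (w_q w_{q+1} \cdots w_m)^{-1}(\alpha).
\]
Applying the standard-ordering result to the good decomposition $(w_q, w_{q+1}, \ldots, w_m)$ of $w_q \cdots w_m$ gives $w_q^{-1}(\alpha) \geq w^{-1}(\alpha)$, while a direct computation in the spirit of step~1 applied to the pair $(w'_1, v)$ yields $(w'_1)^{-1}(\alpha) \geq w_q^{-1}(\alpha)$, with strict inequality when $v \neq 1$. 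Chaining the two inequalities gives the desired bound, and tracing through the cases of equality forces $v = 1$ and $q = m$, which together with the standard-ordering analysis imposes $m = 1$.

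The main obstacle will be the explicit calculation of step~1: the factors $v_{i,a_i}$ end with $s_n$ or $s_{n-1}$ according to the parity of $i$, so the iterated application of Lemma \ref{lemcombD} requires careful parity bookkeeping, especially at the branching node $\alpha_{n-2}$ of the $D_n$ Dynkin diagram. A secondary subtlety is the comparison $(w'_1)^{-1}(\alpha) \geq w_q^{-1}(\alpha)$, which needs an analog of Lemma \ref{lemcA} suited to the $v_{i,a_i}$ chains; the crucial input is that by Lemma \ref{lem:commute}, the starting index $b_j$ in the canonical expression of $w'_1$ satisfies $b_j \geq j+1$, so that $v$ is nontrivial whenever $q < m$. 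Once these explicit formulas are in hand, the inequality and the characterisation of equality follow transparently by comparing the supports of the two sums of simple roots.
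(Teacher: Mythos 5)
Your plan reproduces the paper's own proof almost step for step: reduce to $P^{\alpha_1}$ and $P^{\alpha_n}$ via the diagram automorphism, settle $P^{\alpha_1}$ directly, establish the claim for the standard decreasing ordering by explicitly computing $w_1^{-1}(\alpha_{a_k})$ and $w^{-1}(\alpha_{a_k})$ from the canonical $v_{i,a_i}$-form, and then handle an arbitrary ordering by commuting $s_{a_j}$ past $w_1,\dots,w_{q-1}$ (Lemma \ref{lem:commute}), applying the claim to the tail decomposition $(w_q,\dots,w_m)$, and chaining the three (in)equalities. The one inaccuracy worth flagging is your ``crucial input'' $b_j \geq j+1$: that bound is the type $A_n$ one, lifted from the proof of Proposition \ref{prop:heightA}. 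In type $D_n$ the paper writes $(w'_1)^{-1} = w_{a_{h+1},b_{h+1}}\cdots w_{a_j,b_j}$ with the chains running \emph{toward} the branch node, and the relevant consequence of Lemma \ref{lem:commute} is the upper bound $b_j \leq n-3$; this is precisely what keeps the support of $(w'_1)^{-1}(\alpha_{a_j})$ away from $\alpha_{n-2},\alpha_{n-1},\alpha_n$ and forces the strict inequality $(w'_1)^{-1}(\alpha_{a_j}) > w_q^{-1}(\alpha_{a_j})$ whenever $q\neq 1$. Your framing ``strict when $v \neq 1$'' ends up equivalent because $q\neq 1$ already forces $v \neq 1$ through the same commutation constraint, but you should state the bound in its $D_n$ form. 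The ``main obstacle'' you identify — the parity-sensitive computation of $w_1^{-1}(\alpha)$ and $w^{-1}(\alpha)$ through the branch node — is indeed exactly what the paper carries out by cases ($r=k$, $r=k-1$, $r\leq k-2$, with subcases on the parity of $k$), so no alternative idea is missing; it just remains to be executed.
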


\begin{proof}
It suffices to consider the cases where $P = P^{\alpha_{1}}$, 
$P = P^{\alpha_{n}}$, since there is an automorphism of the Dynkin 
diagram of $G$ sending $\alpha_{n-1}$ to $\alpha_{n}$.

Also, the case where $P = P^{\alpha_1}$ is easy: 
if there is a unique peak of $Q_{w}$, then by construction 1, 
we have $w=w_{1}$ and so we are done. Otherwise, by the same construction, 
we have $w=s_{n}s_{n-1}s_{n-2}\cdots s_{1}$. 
Using again the existence of an automorphism of the Dynkin diagram 
sending $\alpha_{n}$ to $\alpha_{n-1}$, without loss of generality, 
we may assume that $w_{1}=s_{n}$ and $w_{2}=s_{n-1}s_{n-2}\cdots s_{1}$. 
Therefore, we have $\alpha=\alpha_{n}$ and 
$w_{1}^{-1}(\alpha_{n})=-\alpha_{n} 
> -(\alpha_{n}+\sum_{l=1}^{n-2}\alpha_{l})=w^{-1}(\alpha_{n})$. 
Hence, we are done.

Now, we turn to the case $P=P^{\alpha_{n}}$. We adapt the argument 
of the proof of Proposition \ref{prop:heightA}.

Recall that by Lemma \ref{lemWlong} there exists a unique increasing sequence 
$1=a_{1} < a_{2} < \ldots < a_{k}\leq n$ of integers such that 
$w=v_{k,a_{k}}v_{k-1,a_{k-1}}\cdots v_{1,a_{1}}$.

{\it Claim.} The proposition holds for the standard decreasing ordering 
$j_{1}=k > j_{2} > \cdots > j_{m}$ of the elements of $J(w)$.

Here again, we may assume that $m\geq 2$. Let $r=j_{2}+1$.
We consider the case where $k$ is even. The proof for the case 
where $k$ is odd is similar. By construction 1, we have 
$w_{1}=v_{k,a_{k}}v_{k-1, a_{k-1}} \cdots v_{r,a_{r}}$. 
Further, we have $\alpha=\alpha_{a_{k}}$. Therefore, we have 
\[ w_{1}^{-1}(\alpha)=
\begin{cases}-(\sum_{l=a_{r}}^{n-1}\alpha_{l}) ~ \mbox{if} ~  r=k\\
-(\sum_{l=a_{r}}^{n}\alpha_{l}) ~ \mbox{if} ~ r=k-1 \\
-(\sum_{l=a_{r}}^{n-(k+1-r)}\alpha_{l}
+2(\sum_{l=n-(k-r)}^{n-2}\alpha_{l})+\alpha_{n-1}+\alpha_{n}) 
~ \mbox{if} ~ r\leq k-2.\end{cases}.\\ \]

{\it Case 1.} $~ r=k$. 

{\it Subcase 1.} $~ r=2$. We have 
$w^{-1}(\alpha)=-(\sum_{l=a_{2}-1}^{n}\alpha_{l})$.

{\it Subcase 2.} $~ r\geq 3$. 
We have 
\[ w^{-1}(\alpha)=-(\sum_{l=a_{k}-k+1}^{n-k}
\alpha_{l}+2(\sum_{l=n-k+1}^{n-2}\alpha_{l})+\alpha_{n-1}+\alpha_{n}).\] 

{\it Case 2.}  $~ r=k-1$. We have 
\[ w^{-1}(\alpha)=-(\sum_{l=a_{k}-k+1}^{n-k}
\alpha_{l}+2(\sum_{l=n-k+1}^{n-2}\alpha_{l})+\alpha_{n-1}+\alpha_{n}).\]

{\it Case 3.} $~ r\leq k-2$. We have 
\[ w^{-1}(\alpha)=-(\sum_{l=a_{k}-k+1}^{n-k}
\alpha_{l}+2(\sum_{l=n-k+1}^{n-2}\alpha_{l})+\alpha_{n-1}+\alpha_{n}).\]

This completes the proof of the claim.

We now prove the proposition for an arbitrary ordering of the elements 
of $J(w)$.

Let $j\in J(w)$ be the first element in this ordering. Let $1\leq q \leq m$ 
be the integer such that $j=j_{q}$. By the proof for the standard decreasing 
ordering, we may assume that $q\neq 1$. By construction 1, we have 
$w_{q}=w'_{1}v$ for some $v\in W$ 
such that ${\ell}(w_{q})={\ell}(w'_{1})+{\ell}(v)$.

Let $h=j_{q+1}$. Then by construction 1, there exists a sequence 
$a_{l} \leq b_{l} \leq n$ ($h+1 \leq l \leq j$) of integers such that 
$b_{l+1}=b_{l}+1$ for all $h+1\leq l \leq j-1$, and we have
$(w'_{1})^{-1}=w_{a_{h+1},b_{h+1}}w_{a_{h+2},b_{h+2}}\cdots w_{a_{j},b_{j}}$. 
Further, we have $a_{l+1}=a_{l}+1$ for all $h+1\leq l \leq j-1$. 
By Lemma \ref{lem:commute}, 
$w'_{1}$ commutes with $w_{l}$ for all $1\leq l \leq q-1$.
Therefore, we have $b_j \leq n-3$.

Now, we have $\alpha=\alpha_{a_{j}}$. Therefore, 
\[ (w'_{1})^{-1}(\alpha_{a_{j}})= 
-\sum_{i=a_{h+1}}^{b_{j}}\alpha_{i}. \]  

On the other hand, we have 
\[
w_{q}^{-1}(\alpha)=\begin{cases}
-(\sum_{l=a_{h+1}}^{n-1}\alpha_{l}) 
~ \mbox{if} ~  j=h+1 ~ \mbox{and ~ is ~ even }\\
-((\sum_{l=a_{h+1}}^{n-2}\alpha_{l})+\alpha_{n}) 
~ \mbox{if} ~  j=h+1 ~ \mbox{and ~ is ~ odd }\\
-(\sum_{l=a_{h+1}}^{n}\alpha_{l}) 
~ \mbox{if} ~ h=j-2 \\
-(\sum_{l=a_{h+1}}^{n+h-j}\alpha_{l}
+2(\sum_{l=n+h+1-j)}^{n-2}\alpha_{l})+\alpha_{n-1}+\alpha_{n}) 
~ \mbox{if} ~ h\leq j-3.\end{cases}.\\ \] 

Since $b_{j}\leq n-3$, we have 
\begin{equation}\label{eqn:w'wD}
(w'_{1})^{-1}(\alpha_{a_{j}}) > 
w_{q}^{-1}(\alpha_{a_{j}}). 
\end{equation} 

Applying the claim to the generalised reduced decomposition 
$\hat{v}=(w_{q}, w_{q+1}, \cdots , w_{m})$  of 
$v=w_{j, a_{j}}w_{j+1, a_{j+1}}\cdots w_{r, a_{r}}$ 
obtained by construction 1 corresponding
to the standard increasing ordering of $\{j_{q}, j_{q+1}, \cdots, j_{m}\}$, 
we obtain 
\begin{equation}\label{eqn:wqvD} 
w_{q}^{-1}(\alpha_{a_{j}})\geq v^{-1}(\alpha_{a_{j}}).
\end{equation} 

Again by Lemma \ref{lem:commute}, we see that $s_{a_{j}}$ 
commutes with $w_{i}$ for all $1\leq i \leq  q-1$. Therefore, we have 
\begin{equation}\label{eqn:wvD} 
w^{-1}(\alpha_{a_{j}})=v^{-1}(\alpha_{a_{j}}).
\end{equation}

Now, the proof of the lemma follows from (\ref{eqn:w'wD}), (\ref{eqn:wqvD}) 
and (\ref{eqn:wvD}).
\end{proof}

\subsection{Type $E_6$}
\label{subsec:E6}

Let $G$ be of type $E_6$. We order the simple roots as in 
\cite[p.58]{HU}; then the minuscule simple roots are $\alpha_1$ 
and $\alpha_6$. Since there is an automorphism of the Dynkin 
diagram of $G$ taking $\alpha_1$ to $\alpha_6$, it is sufficient 
to consider Schubert varieties in $G/P$, where
$P =  P^{\alpha_1} = P_I$ with $I = S \setminus \{ \alpha_1 \}$. 

A reduced decomposition of the longest element $w_0^I \in W^I$ is
\[ (s_6, s_5, s_4, s_3, s_2, s_4, s_1, s_3, s_5, s_4, s_6, s_5, s_2, s_4, s_3, s_1) \]
and all the $w \in W^I$ are obtained by taking certain reduced subexpressions 
of the above one. For any such $w$, we define the standard ordering on 
$\Peaks(Q_w)$ as the ordering induced by the standard increasing ordering 
on its vertices (viewed as positive integers). Using the smoothness
criterion of \cite[Thm.~7.11]{NP}, one may check that 
the $w \in W^I$ such that $\Supp(w)=S$ and $\hX(\hw)$ is smooth 
for this standard ordering are exactly the following:
\[ s_{6}s_{5}s_{2}s_{4}s_{3}s_{1}, 
\quad s_{4}s_{6}s_{5}s_{2}s_{4}s_{3}s_{1}, 
\quad s_{3}s_{4}s_{6}s_{5}s_{2}s_{4}s_{3}s_{1},
\quad s_{1}s_{3}s_{4}s_{6}s_{5}s_{2}s_{4}s_{3}s_{1}, \] 
\[ s_{4}s_{1}s_{3}s_{5}s_{4}s_{6}s_{5}s_{2}s_{4}s_{3}s_{1},\quad s_{2}s_{4}s_{1}s_{3}s_{5}s_{4}s_{6}s_{5}s_{2}s_{4}s_{3}s_{1}, 
\quad s_{6}s_{5}s_{4}s_{3}s_{2}s_{4}s_{1}s_{3}s_{5}s_{4}s_{6}s_{5}
s_{2}s_{4}s_{3}s_{1} = w_0^I. \]
We now describe the varieties associated to the generalised reduced 
decompositions of these elements obtained by construction 1.

If $w = w_0^I$, then $X(w) = G/P$. Thus, there is a unique peak 
and $\hX(\hw) = X(w)$.

In all other cases, there are two peaks and hence two decompositions, 
$\hw$ (for the standard ordering) and $\hw'$ (for the nonstandard one).

For $w = s_{6}s_{5}s_{2}s_{4}s_{3}s_{1}$, we have 
$\hw =(s_{6}s_{5} , s_{2}s_{4}s_{3}s_{1})$. Thus, 
$\hf : \hX(\hw) \longrightarrow X(s_{6}s_{5}) \simeq \bP^{2}$ is a fibration
with fiber $X(s_{2}s_{4}s_{3}s_{1}) \simeq \bP^{4}$. Moreover,
$\hw'=(s_{2}, s_{6}s_{5}s_{4}s_{3}s_{1})$. So, the morphism 
$\hf' : \hX(\hw') \longrightarrow X(s_{2}) \simeq \bP^{1}$ is a fibration 
with fiber $X(s_{6}s_{5}s_{4}s_{3}s_{1}) \simeq \bP^{5}$.

For $w = s_{4}s_{6}s_{5}s_{2}s_{4}s_{3}s_{1}$, we have  
$\hw =(s_{4}s_{2} , s_{6}s_{5}s_{4}s_{3}s_{1})$.
Thus, 
$\hf : \hX(\hw) \longrightarrow X(s_{4}s_{2}) \simeq \bP^{2}$ 
is a fibration with fiber $X(s_{6}s_{5}s_{4}s_{3}s_{1}) \simeq \bP^{5}$. 
Also, $\hw' = (s_{6} ,s_{4}s_{5}s_{2}s_{4}s_{3}s_{1})$. 
The  $\alpha_{4}$ is not minuscule in the Dynkin diagram of 
$\Supp(s_{4}s_{5}s_{2}s_{4}s_{3}s_{1})$. Hence $\hX(\hw')$ is 
singular in view of \cite[Thm.~7.11]{NP} again.

For $w = s_{3}s_{4}s_{6}s_{5}s_{2}s_{4}s_{3}s_{1}$, we have  
$\hw = (s_{3}s_{4}s_{2} , s_{6}s_{5}s_{4}s_{3}s_{1})$. 
Thus, the morphism
$\hf : \hX(\hw) \longrightarrow X(s_{3}s_{4}s_{2}) \simeq \bP^{3}$ 
is a fibration with fiber $X(s_{6}s_{5}s_{4}s_{3}s_{1}) \simeq \bP^{5}$.
Also, $\hw' = (s_{6},s_{3}s_{4}s_{5}s_{2}s_{4}s_{3}s_{1})$. 
The simple root $\alpha_{3}$ is not minuscule in the Dynkin diagram of 
$\Supp(s_{3}s_{4}s_{5}s_{2}s_{4}s_{3}s_{1})$. Hence $\hX(\hw')$ is singular.

For $w = s_{1}s_{3}s_{4}s_{6}s_{5}s_{2}s_{4}s_{3}s_{1}$, we have  
$\hw =(s_{1}s_{3}s_{4}s_{2} , s_{6}s_{5}s_{4}s_{3}s_{1})$. 
Thus, the morphism
$\hf : \hX(\hw) \longrightarrow X(s_{1}s_{3}s_{4}s_{2}) \simeq \bP^{4}$ 
is a fibration with fiber $X(s_{6}s_{5}s_{4}s_{3}s_{1}) \simeq \bP^{5}$. 
Also, $\hw' = (s_{6} , s_{1}s_{3}s_{4}s_{5}s_{2}s_{4}s_{3}s_{1})$. 
The simple root $\alpha_{1}$ is minuscule in the Dynkin diagram of 
$\Supp(s_{1}s_{3}s_{4}s_{5}s_{2}s_{4}s_{3}s_{1})$. Hence $\hX(\hw')$ 
is smooth. The morphism 
$\hf':  \hX(\hw') \longrightarrow X(s_{6}) \simeq \bP^{1}$ is a fibration 
with fiber $G_v/G_v \bigcap P$, where 
$v=s_{1}s_{3}s_{4}s_{5}s_{2}s_{4}s_{3}s_{1}$. Moreover, $G_v$ is of
type $D_5$ and $G_v/G_v \cap P$ is isomorphic to the quadric $\bQ^8$.

For $w = s_{4}s_{1}s_{3}s_{5}s_{4}s_{6}s_{5}s_{2}s_{4}s_{3}s_{1}$, 
we have  
$\hw = (s_{4}s_{5}s_{6}, s_{1}s_{3}s_{4}s_{5}s_{2}s_{4}s_{3}s_{1})
= (s_4 s_5 s_6, v)$, where $v$ is as above. So, the morphism 
$\hf : \hX(\hw) \longrightarrow X(s_{4}s_{5}s_{6}) \simeq \bP^{3}$ 
is a fibration with fiber $\bQ^8$. Also,
$\hw'=(s_{1}, s_{4}s_{5}s_{6}s_{3}s_{4}s_{5}s_{2}s_{4}s_{3}s_{1})$. 
The simple root $\alpha_{4}$ is not minuscule in the Dynkin diagram of 
$\Supp(s_{4}s_{5}s_{6}s_{3}s_{4}s_{5}s_{2}s_{4}s_{3}s_{1})$. Hence 
$\hX(\hw')$ is singular.

For $w = s_{2}s_{4}s_{1}s_{3}s_{5}s_{4}s_{6}s_{5}s_{2}s_{4}s_{3}s_{1}$, 
we have  
$\hw =(s_{2}s_{4}s_{5}s_{6},v)$. 
Thus, the morphism 
$\hf : \hX(\hw) \longrightarrow X(s_{2}s_{4}s_{5}s_{6}) \simeq \bP^{4}$ 
is a fibration with fiber $\bQ^8$ again. Also,  
$\hw'=(s_{1}, s_{2}s_{4}s_{5}s_{6}s_{3}s_{4}s_{5}s_{2}s_{4}s_{3}s_{1})$. 
The simple root $\alpha_{2}$ is not minuscule in the Dynkin diagram of 
$\Supp(s_{4}s_{5}s_{6}s_{3}s_{4}s_{5}s_{2}s_{4}s_{3}s_{1})$. 
Hence $\hX(\hw')$ is singular in this case, too.

Finally, there is a unique element $w\in W^I$ for which $\hX(\hw)$ is 
singular but $\hX(\hw')$ is smooth. Take 
$w=s_{1}s_{3}s_{5}s_{4}s_{6}s_{5}s_{2}s_{4}s_{3}s_{1}$. 
Then 
$\hw = (s_{1}s_{3}, s_{5}s_{4}s_{6}s_{5}s_{2}s_{4}s_{3}s_{1})$. 
The simple root $\alpha_{5}$ is not minuscule in the Dynkin diagram of 
$\Supp(s_{5}s_{4}s_{6}s_{5}s_{2}s_{4}s_{3}s_{1})$. 
Hence $\hX(\hw)$ is singular. Also,
$\hw' = (s_{5}s_{6}, v)$. Hence $\hX(\hw')$ is smooth. The morphism 
$\hf' : \hX(\hw') \longrightarrow X(s_{5}s_{6}) \simeq \bP^{2}$ 
is a fibration with fiber $\bQ^8$ once more.

\subsection{Type $E_7$}
\label{subsec:E7}

Let $G$ be of type $E_7$. Here $\varpi_7$ is the unique minuscule 
fundamental weight. Let $P = P^{\alpha_7} = P_I$, where
$I = S \setminus \{ \alpha_7 \}$. Then $w_0^I$ admits the reduced 
decomposition 
\[ (s_7, s_6, s_5, s_4, s_2, s_3, s_1, s_4, s_5, s_3, s_4, s_6, s_5, 
s_2, s_4, s_3, s_7, s_6, s_5, s_4, s_1, s_3, s_2, s_4, s_5, s_6, s_7). \] 
Like in type $E_6$, we obtain all the $w \in W^I$ by taking certain
reduced subexpressions of the above one, and we define the standard
ordering on the peaks of the associated quivers as the ordering induced
by the standard increasing order on vertices. 

Using again the smoothness criterion of \cite[Thm.~7.11]{NP},
one may check that the $w\in W^I$ such that $\Supp(w)=S$ and 
$\hX(\hw)$ is smooth for the standard ordering of $\Peaks(Q_{w})$ 
are exactly the following:

\[ s_{1}s_{3}s_{2}s_{4}s_{5}s_{6}s_{7}, \quad
s_{4}s_{1}s_{3}s_{2}s_{4}s_{5}s_{6}s_{7}, \quad
s_{5}s_{4}s_{1}s_{3}s_{2}s_{4}s_{5}s_{6}s_{7}, \quad
s_{6}s_{5}s_{4}s_{1}s_{3}s_{2}s_{4}s_{5}s_{6}s_{7}, \]
\[ s_{7}s_{6}s_{5}s_{4}s_{1}s_{3}s_{2}s_{4}s_{5}s_{6}s_{7}, \quad
s_{3}s_{7}s_{6}s_{5}s_{4}s_{1}s_{3}s_{2}s_{4}s_{5}s_{6}s_{7}, \quad 
s_{4}s_{3}s_{7}s_{6}s_{5}s_{4}s_{1}s_{3}s_{2}s_{4}s_{5}s_{6}s_{7}, \]
\[ s_{2}s_{4}s_{3}s_{7}s_{6}s_{5}s_{4}s_{1}s_{3}s_{2}s_{4}s_{5}s_{6}s_{7}, 
s_{5}s_{4}s_{3}s_{7}s_{6}s_{5}s_{4}s_{1}s_{3}s_{2}s_{4}s_{5}s_{6}s_{7},
s_{5}s_{2}s_{4}s_{3}s_{7}s_{6}s_{5}s_{4}s_{1}s_{3}s_{2}s_{4}s_{5}s_{6}s_{7}, \]
\[ s_{7}s_{6}s_{5}s_{4}s_{2}s_{3}s_{1}s_{4}s_{5}s_{3}s_{4}s_{6}s_{5}
s_{2}s_{4}s_{3}s_{7}s_{6}s_{5}s_{4}s_{1}s_{3}s_{2}s_{4}s_{5}s_{6}s_{7} = w_0^I. \]

We now describe the varieties associated to all the generalised reduced
decompositions of these elements obtained by construction 1.

Note that $w = w_0^I$ has a unique peak, and $X(w) = G/P = \hX(\hw)$. 
All other Weyl group elements except $s_{5}s_{2}s_{4}s_{3}s_{7}s_{6}s_{5}s_{4}s_{1}s_{3}s_{2}s_{4}s_{5}s_{6}s_{7}$
have two peaks, and hence two decompositions, $\hw$ (for the standard ordering)
and $\hw'$ (for the nonstandard one).

For $w = s_{1}s_{3}s_{2}s_{4}s_{5}s_{6}s_{7}$, we have  
$\hw = (s_{1}s_{3}, s_{2}s_{4}s_{5}s_{6}s_{7})$. 
So, the morphism 
$\hf: \hX(\hw) \longrightarrow X(s_{1}s_{3}) \simeq \bP^{2}$ 
is a fibration with fiber $X(s_{2}s_{4}s_{5}s_{6}s_{7}) \simeq \bP^{5}$. 
Also, $\hw' = (s_{2}, s_{1}s_{3}s_{4}s_{5}s_{6}s_{7})$. 
So, the morphism 
$\hf' :\hX(\hw') \longrightarrow X(s_{2}) \simeq \bP^{1}$ 
is a fibration with fiber 
$X(s_{1}s_{3}s_{4}s_{5}s_{6}s_{7}) \simeq \bP^{6}$.

For $w = s_{4}s_{1}s_{3}s_{2}s_{4}s_{5}s_{6}s_{7}$,  we have  
$\hw =(s_{4}s_{2}, s_{1}s_{3}s_{4}s_{5}s_{6}s_{7})$. 
Therefore, the morphism 
$\hf : \hX(\hw) \longrightarrow X(s_{4}s_{2}) \simeq \bP^{2}$ is a 
fibration with fiber $X(s_{1}s_{3}s_{4}s_{5}s_{6}s_{7}) \simeq \bP^{6}$. 
Also, $\hw' = (s_{1}, s_{4}s_{3}s_{2}s_{4}s_{5}s_{6}s_{7})$. 
The simple root $\alpha_{4}$ is not a minuscule root of the 
Dynkin diagram of $\Supp(s_{4}s_{3}s_{2}s_{4}s_{5}s_{6}s_{7})$. 
Therefore, $\hX(\hw')$ is singular.

For $w = s_{5}s_{4}s_{1}s_{3}s_{2}s_{4}s_{5}s_{6}s_{7}$, we have  
$\hw = (s_{5}s_{4}s_{2}, s_{1}s_{3}s_{4}s_{5}s_{6}s_{7})$. 
Thus, the morphism 
$\hf : \hX(\hw) \longrightarrow X(s_{5}s_{4}s_{2}) \simeq \bP^{3}$ 
is a fibration with fiber 
$X(s_{1}s_{3}s_{4}s_{5}s_{6}s_{7}) \simeq \bP^{6}$. 
Also, 
$\hw' = (s_{1}, s_{5}s_{4}s_{3}s_{2}s_{4}s_{5}s_{6}s_{7})$. 
Since $\alpha_{5}$ is not a minuscule root of the Dynkin diagram of 
$\Supp(s_{5}s_{4}s_{3}s_{2}s_{4}s_{5}s_{6}s_{7})$, we see that
$\hX(\hw')$ is singular.

For $w = s_{6}s_{5}s_{4}s_{1}s_{3}s_{2}s_{4}s_{5}s_{6}s_{7}$, we have  
$\hw = (s_{6}s_{5}s_{4}s_{2}, s_{1}s_{3}s_{4}s_{5}s_{6}s_{7})$. 
So, the morphism 
$\hf :\hX(\hw) \longrightarrow X(s_{6}s_{5}s_{4}s_{2}) \simeq \bP^{4}$ 
is a fibration with fiber $X(s_{1}s_{3}s_{4}s_{5}s_{6}s_{7}) \simeq \bP^{6}$.
Also, $\hw' = (s_{1}, s_{6}s_{5}s_{4}s_{3}s_{2}s_{4}s_{5}s_{6}s_{7})$. 
The simple root $\alpha_{6}$ is not a minuscule root of the Dynkin diagram of 
$\Supp(s_{6}s_{5}s_{4}s_{3}s_{2}s_{4}s_{5}s_{6}s_{7})$. Therefore, 
$\hX(\hw')$ is singular.

For $w = s_{7}s_{6}s_{5}s_{4}s_{1}s_{3}s_{2}s_{4}s_{5}s_{6}s_{7}$, we have  
$\hw = (s_{7}s_{6}s_{5}s_{4}s_{2}, s_{1}s_{3}s_{4}s_{5}s_{6}s_{7})$. 
So, the morphism 
$\hf : \hX(\hw) \longrightarrow X(s_{7}s_{6}s_{5}s_{4}s_{2}) \simeq \bP^{5}$ 
is a fibration with fiber $X(s_{1}s_{3}s_{4}s_{5}s_{6}s_{7}) \simeq \bP^{6}$. 
Also, 
$\hw'=(s_{1}, s_{7}s_{6}s_{5}s_{4}s_{3}s_{2}s_{4}s_{5}s_{6}s_{7})$.
So, the morphism 
$\hf': \hX(\hw') \longrightarrow X(s_{1}) \simeq \bP^{1}$ is a fibration 
with fiber $G_{v}/G_{v} \bigcap P$, where 
$v = s_{7}s_{6}s_{5}s_{4}s_{3}s_{2}s_{4}s_{5}s_{6}s_{7}$. Moreover,
$G_v$ is of type $D_6$ and $G_v/G_v \bigcap P$ is isomorphic to
the quadric $\bQ^{10}$.

For $w = s_{3}s_{7}s_{6}s_{5}s_{4}s_{1}s_{3}s_{2}s_{4}s_{5}s_{6}s_{7}$, 
we have $\hw = (s_{3}s_{1},v)$, where $v$ is as above. 
Therefore, the morphism 
$\hf :\hX(\hw) \longrightarrow X(s_{3}s_{1}) \simeq \bP^{2}$ 
is a fibration with fiber $\bQ^{10}$. Also,
$\hw' = (s_{7}s_{6}s_{5}, s_{3}s_{4}s_{1}s_{3}s_{2}s_{4}s_{5}s_{6}s_{7})$. 
The simple root $\alpha_{3}$ is not a minuscule root of 
$\Supp(s_{3}s_{4}s_{1}s_{3}s_{2}s_{4}s_{5}s_{6}s_{7})$.
Therefore, $\hX(\hw')$ is singular.

For $w = s_{4}s_{3}s_{7}s_{6}s_{5}s_{4}s_{1}s_{3}s_{2}s_{4}s_{5}s_{6}s_{7}$, 
we have  $\hw = (s_{4}s_{3}s_{1}, v)$. Therefore, the morphism 
$\hf :\hX(\hw) \longrightarrow X(s_{4}s_{3}s_{1}) \simeq \bP^{3}$ 
is again a fibration with fiber $\bQ^{10}$. Also, we have
$\hw' = (s_{7}s_{6}, s_{4}s_{3}s_{5}s_{4}s_{1}s_{3}s_{2}s_{4}s_{5}s_{6}s_{7})$. 
The simple root $\alpha_{4}$ is not a minuscule root of 
$\Supp(s_{4}s_{3}s_{5}s_{4}s_{1}s_{3}s_{2}s_{4}s_{5}s_{6}s_{7})$. 
Therefore, $\hX(\hw')$ is singular.

For $w = s_{2}s_{4}s_{3}s_{7}s_{6}s_{5}s_{4}s_{1}s_{3}s_{2}s_{4}s_{5}s_{6}s_{7}$, 
we have $\hw = (s_{2}s_{4}s_{3}s_{1},v)$. Therefore, the morphism 
$\hf : \hX(\hw) \longrightarrow X(s_{2}s_{4}s_{3}s_{1}) \simeq \bP^{4}$ is still 
a fibration with fiber $\bQ^{10}$. Also, we have
$\hw' = (s_{7}s_{6}, s_{2}s_{4}s_{3}s_{5}s_{4}s_{1}s_{3}s_{2}s_{4}s_{5}s_{6}s_{7})$. 
The simple root $\alpha_{2}$ is not a minuscule root of 
$\Supp(s_{2}s_{4}s_{3}s_{5}s_{4}s_{1}s_{3}s_{2}s_{4}s_{5}s_{6}s_{7})$. 
Therefore, $\hX(\hw')$ is singular.

For $w = s_{5}s_{4}s_{3}s_{7}s_{6}s_{5}s_{4}s_{1}s_{3}s_{2}s_{4}s_{5}s_{6}s_{7}$,
we have  $\hw = (s_{5}s_{4}s_{3}s_{1}, v)$. Therefore, the morphism 
$\hf : \hX(\hw) \longrightarrow X(s_{2}s_{4}s_{3}s_{1}) \simeq \bP^{4}$ 
is a fibration with fiber $\bQ^{10}$ once more. Also, we have
$\hw' = (s_{7}, s_{5}s_{4}s_{3}s_{6}s_{5}s_{4}s_{1}s_{3}s_{2}s_{4}s_{5}s_{6}s_{7})$. 
The simple root $\alpha_{5}$ is not a minuscule root of 
$\Supp(s_{5}s_{4}s_{3}s_{5}s_{4}s_{1}s_{3}s_{2}s_{4}s_{5}s_{6}s_{7})$. 
Therefore, $\hX(\hw')$ is singular.

For 
$w =s_{5}s_{2}s_{4}s_{3}s_{7}s_{6}s_{5}s_{4}s_{1}s_{3}s_{2}s_{4}s_{5}s_{6}s_{7}$, 
there are three peak elements. For the standard ordering, we have  
$\hw = (s_{5}, s_{2}s_{4}s_{3}s_{1}, s_{7}s_{6}s_{5}s_{4}s_{3}s_{2}s_{4}s_{5}s_{6}s_{7})$. 
Thus, $\hX(\hw)$ is a tower of fibrations with fibers $\bP^{1}$, $\bP^{4}$ and 
$G_{v}/G_{v} \bigcap P$, where $v$ is as above. 
There is an ordering for which 
$\hw' = (s_{2}, s_{5}s_{4}s_{3}s_{1}, s_{7}s_{6}s_{5}s_{4}s_{3}s_{2}s_{4}s_{5}s_{6}s_{7})$. 
In this case also, $\hX(\hw')$ is a tower of fibrations with fibers 
$\bP^{1}$, $\bP^{4}$ and $G_{v}/G_{v} \bigcap P$. 
For all other orderings, $\hX(\hw')$ is singular.

Finally, if  $\hX(\hw)$ is singular for some $w \in W^I$, then 
$\hX(\hw')$ is also singular for any generalised reduced decomposition 
$\hw'$ obtained by construction 1 corresponding to a nonstandard ordering.

\subsection{Equality of Weyl groups and root inequality in exceptional types}
\label{subsec:except}

Let $G$ be of type $E_6$ or $E_7$. As in Section \ref{sec:weyl},
we consider a minuscule parabolic subgroup $P = P_I$ of $G$,
a Weyl group element $w \in W^I$, and a generalised reduced decomposition 
$\hw=(w_1, w_2, \ldots , w_m)$ of $w$, obtained by construction 1
corresponding to any ordering of $\Peaks(Q_w)$ such that $\hX(\hw)$ 
is smooth. Then Theorem \ref{thm:WMG} adapts to this setting:

\begin{theorem}\label{thm:WME}
With the above notation, we have 
\[ W(G_{w_1, \hx}, T_{w_1}) = W(G_{w_1, w_1 x_1}, T_{w_1}) \]
and this group (viewed as a subgroup of $W$) is generated by simple reflections. 
\end{theorem}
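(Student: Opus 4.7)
The plan is to proceed by a case-by-case inspection based on the explicit classification of smooth $\hX(\hw)$ carried out in Subsections \ref{subsec:E6} and \ref{subsec:E7}. For each pair $(w,\hw)$ appearing on the smoothness lists of those subsections, one has an explicit short reduced decomposition of the first factor $w_1$ and a description of $X(w_1)$ as a minuscule homogeneous variety under $G_{w_1}$: either a projective space $\bP^k$ (with $G_{w_1}$ of type $A_k$) or an even-dimensional quadric $\bQ^{2k}$ (with $G_{w_1}$ of type $D_{k+1}$, via the second half of the $E_6$-list and the analogous $E_7$-list involving $\bQ^{10}$).

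As in the proofs of Lemma \ref{lemWM} and Proposition \ref{propWMGD}, the two parts of Lemma \ref{lemMT}(4)-(5) give the chain
\[
W(G_{w_1,\hx},T_{w_1}) \;=\; W(G_{w_1,wx},T_{w_1}) \;\subset\; W(G_{w_1,w_1 x_1},T_{w_1}),
\]
so the theorem reduces to the single assertion that $W(G_{w_1,w_1 x_1},T_{w_1})$ is generated by simple reflections of $W$: once this is established, Lemma \ref{lemsr} applied to each generator produces the reverse inclusion $W(G_{w_1,w_1 x_1},T_{w_1}) \subset W(G_{w_1,wx},T_{w_1})$ and finishes both halves of the statement simultaneously.

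For the generation claim, I would dispatch the cases according to the shape of $X(w_1)$. When $X(w_1) \simeq \bP^k$, the subgroup $G_{w_1}$ is of type $A_k$ and $w_1$ is a (very short) minuscule element of $G_{w_1}$ whose reduced decomposition can be read off directly from Subsection \ref{subsec:E6} or \ref{subsec:E7}; the combinatorial argument of Lemma \ref{lemWM}, applied internally to $G_{w_1}$ in place of $G$ and $w_1$ in place of $w$, then shows that $W(G_{w_1,w_1 x_1},T_{w_1})$ is generated by simple reflections. When $X(w_1) \simeq \bQ^{2k}$, $G_{w_1}$ is of type $D_{k+1}$ and the same reduction applies with Proposition \ref{propWMGD} in place of Lemma \ref{lemWM}. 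The only instance in which $w_1$ is not a first factor of one of these two shapes is the three-peak tower $w = s_5 s_2 s_4 s_3 s_7 s_6 s_5 s_4 s_1 s_3 s_2 s_4 s_5 s_6 s_7$ of $E_7$, for which both possible first factors $w_1 \in \{s_5,s_2\}$ make $G_{w_1}$ of rank one and render the assertion trivial.

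The main obstacle is organisational rather than conceptual: one must go through every $(w,\hw)$ on the smoothness lists of Subsections \ref{subsec:E6} and \ref{subsec:E7}, including the nonstandard orderings of $\Peaks(Q_w)$ that also yield a smooth $\hX(\hw')$ (such as the $E_6$ case $w = s_1 s_3 s_5 s_4 s_6 s_5 s_2 s_4 s_3 s_1$ with $\hw' = (s_5 s_6, v)$), record the corresponding $w_1$ and $G_{w_1}$, and verify that the relevant type-$A$ or type-$D$ combinatorial lemma from Subsections \ref{subsec:weylA}-\ref{subsec:weylD} applies to the short reduced word of $w_1$. No new identity is needed beyond those already established in types $A_n$ and $D_n$; the exceptional input is exactly the classification tables produced in Subsections \ref{subsec:E6} and \ref{subsec:E7}.
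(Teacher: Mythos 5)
Your overall strategy matches the paper's: reduce, via Lemma~\ref{lemMT}(4)--(5) and Lemma~\ref{lemsr}, to the assertion that $W(G_{w_1,w_1 x_1},T_{w_1})$ is generated by simple reflections of $W$, and verify that assertion by inspecting the explicit smoothness lists in Subsections~\ref{subsec:E6}--\ref{subsec:E7}. The paper's own proof is exactly this one-line observation that $G_{w_1}$ is always of type $A$, after which the argument of Proposition~\ref{propWMGD} is adapted.

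However, there is a misreading of the classification that introduces a spurious case. In every entry of the $E_6$ and $E_7$ lists, the base $X(w_1)$ of the fibration $\hf$ is a \emph{projective space}: $w_1$ is a product of distinct simple reflections whose supports form a chain in the Dynkin diagram, so $G_{w_1}$ is always of type $A$. The quadrics $\bQ^8$ and $\bQ^{10}$ appearing in those subsections are the \emph{fibers} of $\hf$ (they are $G_v/G_v\cap P$ with $G_v$ of type $D$), not the base $X(w_1)$; the dichotomy ``$X(w_1)\simeq\bP^k$ or $X(w_1)\simeq\bQ^{2k}$'' therefore never materialises, and your second branch is vacuous. This matters for a further reason: even if some $X(w_1)$ were a quadric, Proposition~\ref{propWMGD} would not apply as you describe. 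That proposition concerns a $D_n$ ambient group in which $G_{w'_1}$ is shown to have Dynkin diagram of type $A$, and it proceeds via the type-$A$ commutation Lemma~\ref{lemcA}; it is not a statement about $G_{w_1}$ of type $D$, and its internal mechanism depends on $G_{w'_1}$ being of type $A$. So the argument you sketch for the quadric branch would not close. Fortunately, since only the projective-space case occurs, dropping the spurious branch leaves a correct proof that coincides with the paper's.
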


\begin{proof}
By descriptions of $\hX(\hw)$ in \ref{subsec:E6} and \ref{subsec:E7},
we see that the Dynkin diagram of $G_{w_{1}}$ is of type $A$.
Therefore, we can adapt the proof of Proposition \ref{propWMGD}.
\end{proof}

Also, by using the same descriptions, one may readily check the following:

\begin{proposition}\label{prop:heightE}
Let $\alpha$ be the unique simple root such that $w_1^{-1}(\alpha)$ 
is a negative root (see \cite[Lem.~5.1]{BK}). Then we have 
$w_1^{-1}(\alpha) \geq  w^{-1}(\alpha)$. 
Further, $w_1^{-1}(\alpha) = w^{-1}(\alpha)$ if and only if $w = w_1$; 
that is, $m=1$.
\end{proposition}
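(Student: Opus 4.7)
The plan is to verify the proposition by a direct case-by-case check, leveraging the classification carried out in Subsections \ref{subsec:E6} and \ref{subsec:E7}. For every minuscule $w \in W^I$ with $\Supp(w) = S$, those subsections list the explicit reduced expression of $w$ together with all generalised reduced decompositions $\hw = (w_1, \ldots, w_m)$ obtained by construction 1 for which $\hX(\hw)$ is smooth, and they identify the first factor $w_1$ in every case. If $m = 1$, then $w = w_1$ and both sides of the inequality agree, giving the trivial ``only if'' direction. Hence it suffices to establish the strict inequality $w_1^{-1}(\alpha) > w^{-1}(\alpha)$ whenever $m \geq 2$.

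To carry this out, I would first identify $\alpha$ in each case: since $w_1$ is minuscule with a single peak, $\alpha = \beta(p)$ for $\Peaks(Q_{w_1}) = \{p\}$, which can be read off the reduced word of $w_1$ recorded in the classification. Writing $w = w_1 w'$ with $\ell(w) = \ell(w_1) + \ell(w')$, one has $w_1^{-1}(\alpha) = -\gamma$ for a positive root $\gamma$ supported on $\Supp(w_1)$, and $w^{-1}(\alpha) = (w')^{-1}(-\gamma)$. Applying the explicit reduced expression of $w'$ to $-\gamma$ yields $w^{-1}(\alpha)$ as a coefficient-wise specified negative integral combination of simple roots, and the comparison of coefficients is immediate. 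The point that makes the check transparent is that, in each smooth case listed, the base $X(w_1)$ of the fibration $\hf$ is either a projective space or one of the quadrics $\bQ^8$, $\bQ^{10}$, so the root $\gamma$ is very short (its expansion involves only the simple roots of $\Supp(w_1)$), while $(w')^{-1}(-\gamma)$ picks up strictly additional negative contributions from the simple reflections of $w'$.

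A useful organisational reduction, parallel to the device used in Propositions \ref{prop:heightA} and \ref{prop:heightD}, comes from Lemma \ref{lem:commute}: if the ordering of $\Peaks(Q_w)$ places $i_q$ first with $q \neq 1$, then the resulting first factor $w'_1$ commutes with $w_1, \ldots, w_{q-1}$ of the standard decomposition. One may therefore absorb those leading factors into a new $w'$ and reduce the nonstandard-ordering case to the standard-ordering case applied to the shorter element $w_q w_{q+1} \cdots w_m$. The main obstacle is thus purely bookkeeping: roughly a dozen elements in $E_6$ and around twenty in $E_7$, each with its valid orderings enumerated in the classification. Since in every smooth case the Dynkin diagram of $G_{w_1}$ is of type $A$ (or else one of the $\bQ$ fibers, handled by a single direct computation), no subtle cancellation arises and each check reduces to elementary simple-root arithmetic — which is precisely why the authors state the result with ``one may readily check''.
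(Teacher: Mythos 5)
Your proposal matches the paper's approach exactly: the authors establish Proposition \ref{prop:heightE} by appealing to the explicit classification in Subsections \ref{subsec:E6} and \ref{subsec:E7} and noting that the inequality can then ``readily'' be read off case by case. Your outline of how to organise that check --- identifying $\alpha$ as the colour of the unique peak of $Q_{w_1}$, writing $w = w_1 w'$ and computing $w^{-1}(\alpha) = (w')^{-1}\bigl(w_1^{-1}(\alpha)\bigr)$, and invoking Lemma \ref{lem:commute} to reduce nonstandard orderings to the standard one --- is precisely the bookkeeping the paper leaves implicit (your labelling of the two directions of the ``iff'' is swapped, but the logic is correct).
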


\end{document}